\documentclass[a5paper, 10pt]{article}

\usepackage{cite, amsmath, amssymb, booktabs, lastpage, graphicx}
\usepackage[hidelinks]{hyperref}

\usepackage{geometry}
\geometry{margin=1.5cm,a5paper,twoside,inner=2cm}

\usepackage{setspace}

\usepackage{amsthm}

\theoremstyle{plain}
\newtheorem{lemma}{Lemma}
\newtheorem{theorem}{Theorem}
\newtheorem{conjecture}{Conjecture}

\newtheorem{claim}{Claim}
\newtheorem{observation}{Observation}

\theoremstyle{remark}

\theoremstyle{definition}

\usepackage{graphicx}
\makeatletter
\renewcommand{\maketitle}{
	\begin{center}

		\baselineskip=0.30in
		{\Large\bfseries \@title} \par
		\vspace{5mm}
		\baselineskip=0.2in
		{\large\bfseries \@author}\par
		\vspace{1mm}
		{\it \@address} \par
		{\small\tt \@email} \par
		\vspace{3mm}
		
	\end{center}
	\vspace{3mm}
}

\newcommand{\address}[1]{\def\@address{#1}}
\newcommand{\email}[1]{\def\@email{#1}}


\thispagestyle{empty}
\makeatother

\newcommand{\acknowledgment}[1]{\vspace{5mm}\singlespacing
	{\noindent\textbf{Acknowledgment\/} #1}
}

\usepackage{fancyhdr}

\fancyhead{}
\fancyfoot{}

\voffset = 18pt
\headsep = 3pt

\usepackage[margin=1cm,%
			font=footnotesize,%
			format=hang,%
			labelsep=period,%
			labelfont=bf]{caption}




\fancyhead[RO]{\footnotesize \thepage}
\fancyhead[LE]{\footnotesize \thepage}

\pagestyle{fancy}

\setcounter{page}{1}



\title{Packing spanning arborescences with extra large one}
\author{Hui Gao}


\address{School of Mathematics, China University of Mining and Technology, Xuzhou, Jiangsu 221116, China }

\email{gaoh1118@yeah.net}

\date{\today}



\begin{document}

\maketitle

\begin{abstract}
The celebrated Nash-Williams and Tutte's theorem states that a graph $G=(V, E)$ contains $k$ edge disjoint spanning
trees if and only if $\nu_{f}(G) \geq k$, where
$$\nu_{f}(G):=\min_{|\mathcal{\mathcal{P}}|>1, \text{$\mathcal{P}$ is a partition of $V(G)$}}\frac{|E( \mathcal{P})|}{|\mathcal{P}|-1}.$$
Inspired by the NDT theorem as structural explanations for the fractional part of Nash-Williams' forest decomposition theorem, Fang and Yang extended Nash-Williams and Tutte's theorem and proved that if $\nu_{f}(G) > k+ \frac{d-1}{d}$, then $G$ contains $k$ edge disjoint spanning trees and another forest $F$ with $ |E(F)|> \frac{d-1}{d} (|V(G)|-1)|$, and if $F$ is not a spanning tree, then $F$ has a component with at least $d$ edges. In this paper, we give a digraphic version of their result; however, the mixed graphic version remains open.
\end{abstract}

{\em Keywords: packing; arborescence; branching}

{\em AMS subject classifications:  05B20, 05C40, 05C70}

\onehalfspacing

\section{Introduction}


All graphs, digraphs and mixed graphs in this paper are considered to be multiple, that is, they can have multiple edges or arcs but not loops.

Throughout the introduction, let $G=(V, E)$ be a graph, $D=(V, A)$ be a digraph, and $X, Y \subseteq V$ ($X$ and $Y$ don't have to be disjoint).
Denote by $A[X,Y]$ the set of arcs in $A$ with their tails in $X$ and heads in $Y$; when $X=Y$, denote $A(X):=A[X, X]$ and $d_{A}^{-}(X):=|A(V \setminus X, X)|$. Denote by $E(X)$ the set of edges in $E$ with both end vertices in $X$.
Sometimes, if $D_{0}$ is a subdigraph of $D$, we write $d_{D_{0}}^{-}(X):=d_{A(D_{0})}^{-}(X)$.
Let $X_{1}, \ldots, X_{t}$ be disjoint subsets of $V$, we call $\mathcal{P}= \{X_{1}, \ldots, X_{t} \}$ a {\em subpartition of $V$} and particularly a {\em partition of  $V$} if $V=\cup_{j=1}^{t}X_{j}$. Denote by $\mathcal{D}(V)$ the set of all subpartitions of $V$. Denote $\cup \mathcal{P}:=\cup_{X \in \mathcal{P}}X$ for $\mathcal{P} \in \mathcal{D}(V)$.

To {\em shrink} $X$ in $D$ is to delete all  arcs with both end vertices in $X$ and then merge the vertices of $X$ into a single vertex. We denote the resulting digraph  by $D/X$. For a subpartition $\mathcal{P}$ of $V$, denote by $E(\mathcal{P})$ the set  consisting of  all edges in $E$ with end vertices belonging to different subsets in $\mathcal{P} \cup \{V \setminus \cup \mathcal{P} \}$. Let $A_{0}, A_{1}$ be two sets of arcs with both end vertices in $V$ such that $A_{0} \subseteq A$ and $A_{1} \cap A = \emptyset$. Denote by $D-A_{0}$ the subdigraph $(V, A\setminus A_{0})$, by $D+A_{1}$ the digraph $(V, A \cup A_{1})$, and by $D[X]$ the subdigraph $(X, A(X))$, respectively. A subdigraph $D_{0}$ is said to {\em span $X$} if $V(D_{0})=X$ and is said to be {\em spanning} if $V(D_{0})=V$.

A digraph $T$ is called an \emph{$r$-arborescence} if its underlying graph is a tree, and for any $v \in V(T)$,  there is exactly one directed path in $T$ from $r$ to $v$.  We say the vertex $r$ is the {\em root of $T$} or $T$ is {\em rooted at $r$}. A digraph $F$ is called a \emph{branching} if each component of $F$ is an arborescence. The \emph{root set} $R(F)$ of $F$  consists of all roots of its components; We call $F$ a \emph{$c$-branching} if $|R(F)|=c$.

For simplicity,  we write $[k]:=\{1, 2, \ldots, k \}$ for $k \in \mathbb{Z}^{+}$, $X+x_{0}:=X \cup \{x_{0} \}$ for $x_{0} \notin X$, and $X-x_{1}:=X \setminus \{x_{1} \}$ for $x_{1} \in X$.
For all notations and terminology used, but not defined here, we refer to the textbook~\cite{BM-08}.

The celebrated Nash-Williams' forest decomposition theorem (Theorem~\ref{Nash-forest-decomposition}) characterized a graph which can be decomposed into $k$ edge disjoint forest. As its dual, Nash-Williams and Tutte's theorem (Theorem~\ref{Nash-Williams-Tutte})  characterized  a graph $G$ having $k$ edge disjoint spanning trees.
The {\em fractional arboricity} $\gamma_{f}(G)$ is defined by
$$\gamma_{f}(G):= \max_{X \subseteq V(G), |X|>1}\frac{|E(X)|}{|X|-1}.$$
The {\em fractional packing number} $\nu_{f}(G)$ is defined by
$$\nu_{f}(G):=\min_{|\mathcal{\mathcal{P}}|>1, \text{$\mathcal{P}$ is a partition of $V(G)$}}\frac{|E( \mathcal{P})|}{|\mathcal{P}|-1}.$$

\begin{theorem}[\cite{N-64}]\label{Nash-forest-decomposition}
A graph $G$ can be decomposed into $k$ edge disjoint forests if and only if $\gamma_{f}(G) \leq k$.
\end{theorem}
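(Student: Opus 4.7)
The plan is to prove both directions, with necessity being immediate and sufficiency requiring a structural argument. For necessity, suppose $E(G)=F_{1}\cup\cdots\cup F_{k}$ is a decomposition into edge-disjoint forests. For any $X\subseteq V(G)$ with $|X|>1$, each $F_{i}\cap E(X)$ is a forest on at most $|X|$ vertices, hence contains at most $|X|-1$ edges. Summing over $i\in[k]$ gives $|E(X)|\le k(|X|-1)$, so $\gamma_{f}(G)\le k$.

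For sufficiency, my plan is to invoke Edmonds' matroid partition theorem for the graphic matroid $M(G)$, whose independent sets are exactly the forests of $G$ and whose rank function is $r(A)=|V(G)|-c(V(G),A)$, where $c(V(G),A)$ denotes the number of components of the spanning subgraph $(V(G),A)$. That theorem asserts that $E(G)$ can be partitioned into $k$ independent sets of $M(G)$ (i.e.\ $k$ forests) if and only if $|A|\le k\cdot r(A)$ for every $A\subseteq E(G)$.

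It therefore suffices to derive this inequality from $\gamma_{f}(G)\le k$. Fix $A\subseteq E(G)$ and let $C_{1},\ldots,C_{s}$ be the vertex sets of the nontrivial components of the spanning subgraph $(V(G),A)$. Then
\[
|A|=\sum_{j=1}^{s}|A\cap E(C_{j})|\le\sum_{j=1}^{s}|E(C_{j})|,\qquad r(A)=\sum_{j=1}^{s}(|C_{j}|-1).
\]
Since $|C_{j}|>1$ for each $j$, the hypothesis $\gamma_{f}(G)\le k$ yields $|E(C_{j})|\le k(|C_{j}|-1)$, and summing over $j$ gives $|A|\le k\cdot r(A)$, as required.

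The main obstacle is justifying the matroid partition theorem; a self-contained substitute would be a direct exchange argument that starts from a maximum packing of $k$ forests that fails to cover some edge $e$, then traces an augmenting sequence of swaps in the $k$ forests and, upon reaching an obstruction, extracts a vertex set $X$ witnessing $|E(X)|>k(|X|-1)$, contradicting $\gamma_{f}(G)\le k$. This replaces the matroid machinery with a transparent but notationally heavier swap analysis that conveys exactly the same underlying idea.
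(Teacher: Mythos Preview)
The paper does not prove this theorem; it is stated as a classical background result with a citation to Nash-Williams~\cite{N-64} and is only used later (in the sharpness construction) as a black box. So there is no ``paper's own proof'' to compare against.

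Your argument is correct. Necessity is the obvious counting bound. For sufficiency, your reduction to Edmonds' matroid partition theorem for the graphic matroid is standard and the verification that $\gamma_f(G)\le k$ implies $|A|\le k\,r(A)$ for every $A\subseteq E(G)$ is clean; the computation $r(A)=\sum_{j}(|C_j|-1)$ over the nontrivial components of $(V(G),A)$ is exactly right. The only point worth flagging is that invoking matroid partition imports a result of essentially the same strength as the one being proved, so the argument is a reduction rather than a self-contained proof---you already acknowledge this and sketch the alternative augmenting-path/exchange proof, which is indeed how one would make it elementary.
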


\begin{theorem}[\cite{N-61,T-61}]\label{Nash-Williams-Tutte}
A graph $G$ contains $k$ edge disjoint spanning trees if and only if $\nu_{f}(G) \geq k$.
\end{theorem}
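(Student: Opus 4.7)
For necessity, I would argue by edge counting. Suppose $T_1,\ldots,T_k$ are edge-disjoint spanning trees of $G$ and let $\mathcal{P}$ be any partition of $V(G)$ with $|\mathcal{P}|>1$. Contracting the parts of $\mathcal{P}$ inside each $T_i$ produces a spanning connected subgraph of the contracted multigraph $G/\mathcal{P}$ on $|\mathcal{P}|$ vertices, which has at least $|\mathcal{P}|-1$ edges; each of those edges lies in $E(\mathcal{P})$. Summing over $i$ gives $|E(\mathcal{P})|\ge k(|\mathcal{P}|-1)$, hence $\nu_f(G)\ge k$.

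For sufficiency the natural route is to invoke the Edmonds--Nash-Williams matroid union theorem applied to $k$ copies of the cycle matroid $M(G)$. Writing $c(F)$ for the number of components of the spanning subgraph $(V(G),F)$, the rank of $F$ in $M(G)$ is $|V(G)|-c(F)$, so the matroid union formula reads
$$r_{M_1\vee\cdots\vee M_k}(E(G)) \;=\; \min_{F\subseteq E(G)} \Bigl(|E(G)\setminus F|+k\bigl(|V(G)|-c(F)\bigr)\Bigr),$$
and $G$ admits $k$ edge-disjoint spanning trees precisely when this minimum equals $k(|V(G)|-1)$, i.e.\ when $|E(G)\setminus F|\ge k(c(F)-1)$ for every $F\subseteq E(G)$. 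To derive this from the hypothesis $\nu_f(G)\ge k$, I would let $\mathcal{P}$ be the partition of $V(G)$ into the vertex sets of the components of $(V(G),F)$; each edge of $E(\mathcal{P})$ joins distinct components and therefore lies outside $F$, so $|E(G)\setminus F|\ge|E(\mathcal{P})|\ge k(|\mathcal{P}|-1)=k(c(F)-1)$, the case $c(F)=1$ being trivial.

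The main obstacle is of course the matroid union theorem itself, whose proof requires a nontrivial augmenting-sequence argument. A more self-contained alternative is induction on the excess $|E(G)|-k(|V(G)|-1)$: when the excess is zero, every partition inequality is tight and the trees can be built greedily; otherwise one either deletes a slack edge and recurses, or locates a tight subpartition $\mathcal{P}$, packs trees inside each part by induction, and lifts the packing through the contraction $G/\mathcal{P}$. The delicate point in either route is the exchange step that swaps an edge between two partially built trees while preserving acyclicity and connectivity; this exchange lemma is really the arithmetic heart of the theorem, and it is precisely the ingredient one needs a directed analogue of when passing to the arborescence setting that the rest of the paper pursues.
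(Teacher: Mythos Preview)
The paper does not prove Theorem~\ref{Nash-Williams-Tutte} at all; it is quoted as the classical result of Nash-Williams~\cite{N-61} and Tutte~\cite{T-61} and serves purely as background for the digraph analogue developed later. There is therefore no ``paper's own proof'' to compare your proposal against.

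For what it is worth, your sketch is a perfectly standard and correct outline of the classical argument: the necessity direction is the usual counting, and the sufficiency via matroid union applied to $k$ copies of $M(G)$ is one of the canonical proofs. Your closing remark, however, slightly misreads how the paper proceeds in the directed setting: the arborescence results here are not obtained by mimicking an exchange lemma for trees, but by invoking the packing characterization of Lemma~\ref{pack-branching} (from~\cite{GY-21}) and a submodularity/uncrossing argument (the PIEO$^1$ operations). So the ``directed analogue of the exchange step'' you anticipate is not actually the engine of the paper.
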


The Nine Dragon Tree (NDT) Theorem (Theorem~\ref{NDT}), conjectured by Montassier et. al. \cite{MMRZ-12} and proved by Jiang and Yang \cite{JY-17},  acts as a structural explanation for the fractional part of the fractional arboricity. And later, Gao and Yang \cite{GY-23} gave a digraphic version of the NDT Conjecture and prove part of it. A graph $G$ is called $d$-bounded if the maximum degree of $G$ is at most $d$.

\begin{conjecture}[NDT Conjecture, \cite{MMRZ-12}]\label{conjecture-NDT}
Let $G$ be a graph, $k, d$ be nonnegative integers. If $\gamma_{f}(G) \leq k+\frac{d}{d+k+1} $, then $G$ decomposes into $k + 1$ forests with one being $d$-bounded.
\end{conjecture}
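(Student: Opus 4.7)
The claim states that a tightened arboricity bound $\gamma_{f}(G) \leq k + \tfrac{d}{d+k+1}$ upgrades the usual decomposition of $G$ into $k+1$ forests (guaranteed by Theorem~\ref{Nash-forest-decomposition} since $k + \tfrac{d}{d+k+1} \leq k+1$) to one in which a designated forest, call it $F_{0}$, satisfies $\Delta(F_{0}) \leq d$. My plan is to start from an arbitrary such decomposition and apply matroid-union style edge exchanges that reduce a carefully chosen potential until $F_{0}$ is $d$-bounded, deriving a density contradiction with the arboricity hypothesis whenever the exchanges are obstructed.

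First I would fix a decomposition $G = F_{0} \cup F_{1} \cup \cdots \cup F_{k}$ minimizing a potential such as $\Phi = \sum_{v:\, d_{F_{0}}(v) > d}\bigl(d_{F_{0}}(v) - d\bigr)$, breaking ties by secondary invariants (for instance the total number of bad vertices, or a lexicographic comparison of the sorted $F_{0}$-degree sequence). Assume for contradiction $\Phi > 0$, so some vertex $v_{0}$ has $d_{F_{0}}(v_{0}) \geq d+1$. Every $F_{0}$-edge $e = v_{0}u$ must then be immovable: for each $i \in [k]$, swapping $e$ into $F_{i}$ would either reduce $\Phi$ or create a cycle in $F_{i}$, and in the latter case every edge of this cycle is itself a candidate for exchange. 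Iterating builds an exchange digraph on the edges of $G$; an augmenting walk from a bad vertex to a good exchange target would produce a decomposition with strictly smaller $\Phi$, so minimality forbids such a walk.

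The set of vertices reachable via these exchanges spans a subgraph $H \subseteq G$ with two strong properties: each forest $F_{i}$ with $i \geq 1$ spans $H$ as a tree (otherwise further exchanges would be available), contributing exactly $|V(H)| - 1$ edges; and the fraction of vertices of $H$ carrying $F_{0}$-excess is large enough that
$$|F_{0} \cap E(H)| \;>\; \tfrac{d}{d+k+1}\bigl(|V(H)|-1\bigr).$$
Summing across the forests gives $|E(H)| > \bigl(k + \tfrac{d}{d+k+1}\bigr)(|V(H)|-1)$, contradicting $\gamma_{f}(G) \leq k + \tfrac{d}{d+k+1}$.

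The main obstacle is proving that lower bound on $|F_{0} \cap E(H)|$ tightly enough. A single bad vertex produces at most one "extra" edge of $F_{0}$ inside the reachable closure, which is far below the required linear fraction of $|V(H)|-1$; the reachability procedure must therefore be refined so that $H$ contains a positive density of bad vertices and their collective excess can be amortized. This calls for a carefully engineered potential (weighting bad vertices by their excess, or using a layered search that preferentially reaches further bad vertices) together with a counting argument distributing the excess over $|V(H)|-1$. Because the hypothesis $\gamma_{f}(G) \leq k + \tfrac{d}{d+k+1}$ is sharp, there is no slack in the final inequality, and the entire proof rests on implementing this amortization with no waste; this is precisely the technical heart of the argument.
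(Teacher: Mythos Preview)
The paper does not prove this statement at all: Conjecture~\ref{conjecture-NDT} is quoted from \cite{MMRZ-12}, and its truth is recorded as Theorem~\ref{NDT} with the proof attributed to Jiang and Yang \cite{JY-17}. So there is no ``paper's own proof'' to compare your proposal against; your sketch must be judged on its own merits.

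What you have written is not a proof but a description of the natural exchange strategy together with an honest admission of where it stalls. You correctly identify the obstruction: after building the exchange closure $H$ around a single overloaded vertex, the guaranteed $F_{0}$-excess inside $H$ is only a constant, not the linear amount $\tfrac{d}{d+k+1}(|V(H)|-1)$ needed to contradict the hypothesis. Your proposed fix---refining the potential and the reachability so that $H$ contains a positive \emph{density} of bad vertices whose excess can be amortized---is precisely the step that is missing, and you do not indicate how to carry it out. This is not a minor technicality: exchange arguments of exactly this shape were used to prove weaker fractional bounds before \cite{JY-17}, and the reason the conjecture stayed open is that no choice of potential and no straightforward layering makes the amortization go through without loss. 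The Jiang--Yang proof introduces substantially more structure (a carefully designed auxiliary coloring and a nontrivial induction) rather than a mere tuning of the exchange procedure. As it stands, your proposal names the target and the difficulty but supplies no mechanism to bridge them, so it remains a plan rather than a proof.
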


\begin{theorem}[NDT Theorem, \cite{JY-17}]\label{NDT}
The Nine Dragon Tree Conjecture is true.
\end{theorem}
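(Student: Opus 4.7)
The plan is to proceed by contradiction on a minimum counterexample $G$. Since $\gamma_f(G) \leq k + d/(d+k+1) < k+1$, Theorem~\ref{Nash-forest-decomposition} already provides a decomposition $G = F_1 \cup \cdots \cup F_k \cup F$ into $k+1$ edge-disjoint forests. Among all such decompositions I would fix one that is extremal in a carefully chosen sense: first minimize $|\{v : d_F(v) > d\}|$ and then, subject to this, lexicographically minimize the multiset $\{d_F(v) : d_F(v) > d\}$. Since $G$ is assumed to be a counterexample, some vertex $v_0$ still satisfies $d_F(v_0) > d$ in this optimal decomposition.

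The core of the proof is an iterated edge-exchange analysis originating at $v_0$. For an edge $e = uv_0 \in F$, trying to move $e$ into some $F_i$ either yields another forest decomposition --- which, after a further local adjustment, contradicts extremality --- or creates a unique cycle in $F_i + e$, so that any edge of that cycle can be swapped into $F$ in place of $e$. Iterating these swaps in parallel for all $i \in [k]$ builds BFS-style exchange trees in each $F_i$ rooted at $v_0$ and reaches an ever-growing set $R \subseteq V$. At every step one checks whether the resulting decomposition decreases the extremal parameter; if so, we obtain a contradiction. If the process saturates, we extract rigid structure on $R$: each $F_i[R]$ contains a spanning tree of $R$, and every vertex of $R$ retains $d_F$-degree above $d$.

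The decisive, and hardest, step is converting this structure into a subset $X \subseteq R$ with $|E(X)| > (k + d/(d+k+1))(|X|-1)$, contradicting the hypothesis on $\gamma_f(G)$. The $k$ trees contribute exactly $k(|X|-1)$ edges inside $X$, so it suffices to show $|E(F[X])| > (d/(d+k+1))(|X|-1)$. A bare handshake count gives $2|E(F[X])| \geq (d+1)|X| - |E_F(X, V\setminus X)|$, which is only useful when the crossing $F$-edges are tightly controlled, and that control is precisely what the saturated exchange tree is supposed to supply. The cleanest route I would try is a discharging scheme that distributes the excess $d_F(v)-d$ along the levels of the exchange trees, then selects $X$ as the union of reached vertices up to a level where the charges balance. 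Calibrating this distribution so that the deficit per vertex is exactly $d/(d+k+1)$, with no slack lost to the crossing edges, is what makes the bound sharp and is the main obstacle; it is the step that resisted proof until the Jiang--Yang argument, and I would expect any honest attempt to rediscover some form of their token-counting machinery at exactly this point.
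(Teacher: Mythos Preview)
The paper does not prove Theorem~\ref{NDT} at all. It is stated there purely as a cited background result, attributed to Jiang and Yang~\cite{JY-17}, and the paper moves on immediately to its own contribution, Theorem~\ref{directed-version}. So there is no ``paper's own proof'' to compare your proposal against; you have written a sketch for a theorem the present paper takes as a black box.

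As for the proposal itself, it is not a proof but a narrative of what a proof might look like, and you say so yourself. You set up a plausible extremal decomposition and an exchange process, but when you arrive at the point that actually carries the theorem --- producing a set $X$ with $|E(F[X])| > \tfrac{d}{d+k+1}(|X|-1)$ from the saturated exchange structure --- you concede that this ``is the step that resisted proof until the Jiang--Yang argument'' and that you would ``expect any honest attempt to rediscover some form of their token-counting machinery.'' That is an accurate diagnosis, but it means the proposal has a declared gap exactly at the decisive inequality. The extremal parameter you chose (minimizing the number and then the multiset of over-degree vertices) is also weaker than what the Jiang--Yang argument actually uses; their potential function is more refined and is what makes the discharging balance. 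Without specifying that potential and carrying out the counting, the outline does not pin down a proof.
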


\begin{conjecture}[\cite{GY-23}]\label{digraph-NDT}
Let $D$ be a digraph, $k, d$ be positive integers. If  $ \gamma(D) \leq k+ \frac{d-k}{d+1}$  and $\Delta^{-}(D) \leq k+1$, then $D$ decomposes into $k + 1$ branchings $F_{1}, F_{2}, \ldots, F_{k+1}$  with $\Delta^{+}(F_{k+1}) \leq d+1 $.
\end{conjecture}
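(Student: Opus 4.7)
The plan is to mimic Jiang and Yang's arc-swap proof of the graphic NDT (Theorem~\ref{NDT}), with forests replaced by branchings and fundamental cycles replaced by a two-sided (cycle plus in-arc) obstruction. Since $\gamma(D)\le k+\frac{d-k}{d+1}<k+1$ and $\Delta^{-}(D)\le k+1$, a matroid-union / Frank-style branching-packing result should produce an initial edge-disjoint decomposition $(F_{1},\dots,F_{k+1})$ of $D$ into $k+1$ branchings. Among all such decompositions I would fix one that lexicographically minimises the non-increasing out-degree sequence of $F_{k+1}$, with ties broken by $\sum_{v}\max\{0,\,d^{+}_{F_{k+1}}(v)-(d+1)\}$, and then aim to show that this minimiser already satisfies $\Delta^{+}(F_{k+1})\le d+1$.

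Suppose to the contrary that some $v_{0}$ has $d^{+}_{F_{k+1}}(v_{0})\ge d+2$. I would build an exchange process in which each step attempts to move an arc $e$ from one $F_{i}$ to another $F_{j}$, demanding that both $F_{i}-e$ and $F_{j}+e$ remain branchings. Validity now imposes two conditions on $F_{j}+e$: the head of $e$ still has at most one in-arc in $F_{j}$, and no directed cycle is created (equivalently, the tail of $e$ is not reachable from its head in $F_{j}$). Starting from an out-arc of $v_{0}$ in $F_{k+1}$, I would chase blocked exchanges through $F_{1},\dots,F_{k}$, organising the blockages into swap chains in the style of \cite{JY-17}. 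Either some chain can be closed, in which case cascading the swaps yields a decomposition with strictly smaller potential and contradicts minimality, or no chain can be closed, which forces the emergence of a structured ``tight'' set $X\ni v_{0}$.

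In the tight case, each blockage witness certifies either that the unique in-arc of some vertex in the relevant branching already lies inside $X$, or that a specific tail-head pair lies in the same rooted subtree of some $F_{j}$; collating these witnesses, one shows that for every $j\le k$ the subdigraph $F_{j}[X]$ is a spanning sub-branching of $D[X]$ and that the surplus out-arcs of $v_{0}$ in $F_{k+1}$ all stay inside $X$. A counting argument on an appropriate subpartition of $X$ should then produce a density exceeding $k+\tfrac{d-k}{d+1}$, contradicting the hypothesis on $\gamma(D)$. The main obstacle I anticipate is precisely this last step: in the undirected NDT the only kind of blockage is a fundamental cycle and tight sets are built by merging such cycles, but in the directed setting a swap may be blocked by a cycle obstruction, by an in-arc obstruction, or by both simultaneously, and these must be glued together while tracking the root sets of each branching. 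Getting the quantitative balance to land at exactly the coefficient $\frac{d-k}{d+1}$, rather than at one of its neighbours, is the technical crux of the argument.
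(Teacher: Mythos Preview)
The paper does not prove this statement; it is stated as Conjecture~\ref{digraph-NDT}, quoted from \cite{GY-23}, and remains open in general. The only progress the paper records is Theorem~\ref{part-digraph-NDT}, also from \cite{GY-23}, which settles the case $d\le k$, and no proof of that partial result is reproduced here either. So there is no ``paper's own proof'' to compare your attempt against.

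As for the proposal itself, it is a plan rather than a proof, and you flag the gaps yourself. Two points deserve emphasis. First, the existence of an initial decomposition into $k+1$ branchings is not free: the hypothesis $\gamma(D)\le k+\frac{d-k}{d+1}$ is a density bound, and together with $\Delta^{-}(D)\le k+1$ you need to invoke a concrete result (e.g.\ a Frank-type characterization) to get the starting packing; ``should produce'' is not an argument. Second, and more seriously, the directed exchange graph is genuinely different from the undirected one: a swap $F_j+e$ can fail because of an in-degree violation at the head, or because of a directed cycle, and these two obstructions propagate differently through the swap chains. In the undirected NDT the tight set is a union of fundamental cycles and the edge count drops out cleanly; here you would need to show that mixed obstructions still assemble into a set $X$ on which each $F_j[X]$ is a spanning arborescence \emph{and} on which the extra out-arcs of $v_0$ are trapped, and then extract a subpartition violating the $\gamma(D)$ bound. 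You have not indicated how to do any of this concretely, and the fact that \cite{GY-23} only obtains $d\le k$ suggests that the counting step does not go through as smoothly as in \cite{JY-17}. At present this is an outline of a hoped-for argument, not a proof.
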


\begin{theorem}[\cite{GY-23}]\label{part-digraph-NDT}
Conjecture~\ref{digraph-NDT} is true for $d \leq k$.
\end{theorem}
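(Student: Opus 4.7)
The plan is to mirror the exchange-argument technique that Jiang and Yang used to prove the undirected NDT Theorem, carrying it over to the setting of branchings and taking advantage of the extra room that the regime $d\le k$ provides. Under the hypotheses, $d\le k$ forces $\frac{d-k}{d+1}\le 0$, so in fact $\gamma(D)\le k\le k+1$, and together with $\Delta^-(D)\le k+1$ the digraphic version of Nash-Williams' forest-decomposition theorem (the direct analogue of Theorem~\ref{Nash-forest-decomposition}) yields at least one decomposition of $D$ into $k+1$ branchings $F_1,\ldots,F_{k+1}$. Among all such decompositions I fix one minimizing a potential that measures the excess out-degree of $F_{k+1}$, for instance
\[
\Phi(F_1,\ldots,F_{k+1}) := \sum_{v\in V} \bigl(d^+_{F_{k+1}}(v)\bigr)^2,
\]
breaking ties by the lexicographic order of the non-increasing out-degree sequence of $F_{k+1}$.

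Suppose, for contradiction, that some vertex $v_0$ has $d^+_{F_{k+1}}(v_0)\ge d+2$. For each arc $a=v_0w$ leaving $v_0$ in $F_{k+1}$ and each $j\in[k]$, I would attempt to move $a$ into $F_j$ (performing local rearrangements inside $F_j$ if needed) so as to strictly decrease $\Phi$. By the choice of $(F_1,\ldots,F_{k+1})$, every such move must be blocked: either $w$ already has an in-arc in $F_j$, or $F_j+a$ creates a directed cycle. Recording a witness set $X_{a,j}\subseteq V$ for each block (the cycle in the first case, the branching path ending at $w$ in the second) and iterating the exchange/reroute process closes up into a set $Y\subseteq V$ containing $v_0$ together with a small root set $R\subseteq Y$ such that every arc of $D$ with head in $Y\setminus R$ is trapped inside $D[Y]$ among the branchings.

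The final step is double counting on $Y$. Each of $F_1,\ldots,F_k$ contributes one in-arc per non-root vertex of $Y$, while $F_{k+1}$ additionally accounts for the $d+2$ arcs out of $v_0$ plus its own in-arc quota on $Y$. Careful bookkeeping should yield an inequality of the shape
\[
|A(D[Y])| \ \ge\ (k+1)(|Y|-1) - (k-d) + 1,
\]
which rearranges to $\gamma(D) > k+\frac{d-k}{d+1} = \frac{d(k+1)}{d+1}$, contradicting the hypothesis on $\gamma(D)$.

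The main obstacle is the middle phase: constructing and controlling the obstruction set $Y$. Unlike the undirected NDT proof, a single swap must preserve both acyclicity and the branching property (at most one in-arc per vertex) and must respect roots, so the closure argument has to track two kinds of blocks simultaneously and manage rerootings of the $F_j$'s along the way. The assumption $d\le k$ is crucial precisely here, because the $k$ ``free'' branchings $F_1,\ldots,F_k$ outnumber the target out-degree cap $d+1$, giving enough absorption capacity for the exchange cascade to terminate in a tight set; beyond $d\le k$ the cascade can keep generating new obstructions, which is consistent with Conjecture~\ref{digraph-NDT} remaining open in that range. Should the direct exchange argument become unwieldy, a fallback plan is induction on $|V(D)|$: isolate a tight subset $X$ attaining $|A(D[X])|=\frac{d(k+1)}{d+1}(|X|-1)$, apply the theorem recursively to $D[X]$ and to $D/X$, and splice the two decompositions at the shrunk vertex, using the tightness of $X$ to keep the out-degree at the splice point at most $d+1$.
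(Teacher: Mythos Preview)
The present paper does not prove this theorem; it is quoted from \cite{GY-23} as background and no argument for it appears anywhere in the text, so there is no in-paper proof to compare your proposal against.

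On the proposal itself: what you have is an outline rather than a proof, and you say as much when you flag that ``the main obstacle is the middle phase.'' Two concrete problems stand out. First, the obstruction bookkeeping is garbled: you write ``the cycle in the first case, the branching path ending at $w$ in the second,'' but in the first case ($w$ already carries an in-arc in $F_j$) adding $a=v_0w$ creates a vertex of in-degree~$2$, not a cycle, while it is precisely the second case that yields a cycle; moreover the two obstructions are not mutually exclusive, and the natural remedy for the in-degree obstruction is to swap out the old in-arc of $w$, which propagates the defect to another vertex---controlling that cascade is exactly the work you defer. Second, the endgame is only asserted: $Y$ and $R$ are never defined precisely, you give no reason the cascade terminates in a set with the claimed density, and the inequality $|A(D[Y])|\ge (k+1)(|Y|-1)-(k-d)+1$ is offered as something ``careful bookkeeping should yield'' rather than proved; even granting it, the rearrangement to $\gamma(D)>\tfrac{d(k+1)}{d+1}$ silently uses a lower bound on $|Y|$ that you never state. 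The overall strategy---minimise a convex potential over all decompositions into $k+1$ branchings, then run exchanges from a vertex of excessive out-degree until a tight set emerges---is the standard template for NDT-type results and is plausibly what \cite{GY-23} does, but as written the argument has not been carried out.
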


Inspired by the NDT Theorem, Fang and Yang \cite{FY-24} showed Theorem~\ref{undirected-version}, acting as a structural explanation for the fractional part of the fractional packing number.

\begin{theorem}[\cite{FY-24}]\label{undirected-version}
Let $G$ be a graph, $k \geq 0$, $d \geq 1$ be integers.  If $\nu_{f}(G) > k+ \frac{d-1}{d}$, then $G$ contains edge disjoint $k$  spanning trees and another forest $F$ with
$|E(F)|> \frac{d-1}{d}(|V(G)|-1)$, and if $F$ is not a spanning tree, then $F$ has a component with at least $d$ edges. Moreover, the bound of $\nu_{f}(G)  $ is sharp.
\end{theorem}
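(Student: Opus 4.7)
The plan is to combine Theorem~\ref{Nash-Williams-Tutte} with an extremal-packing and exchange argument in the spirit of the proof of Theorem~\ref{NDT}. First, since $\nu_{f}(G)>k$, Theorem~\ref{Nash-Williams-Tutte} supplies $k$ edge-disjoint spanning trees. I would then consider the family of all packings $(T_{1},\ldots,T_{k},F)$ in which the $T_{i}$ are edge-disjoint spanning trees and $F$ is a forest edge-disjoint from them, and select one that first maximizes $|E(F)|$ and, subject to that, lexicographically maximizes the non-increasing sequence of component sizes of $F$. Writing $n:=|V(G)|$, let $\mathcal{P}$ denote the partition of $V(G)$ whose parts are the components of $F$, with each $F$-isolated vertex forming a singleton part, and set $m:=|\mathcal{P}|$, so that $|E(F)|=n-m$.

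The maximality of $|E(F)|$ forces every edge of $G\setminus\bigcup_{i}E(T_{i})$ to lie inside a single part of $\mathcal{P}$: otherwise its addition to $F$ would yield a strictly larger edge-disjoint forest. Hence $E(\mathcal{P})\subseteq\bigcup_{i}E(T_{i})$, and writing $e_{i}$ for the number of $T_{i}$-edges crossing $\mathcal{P}$, we have $|E(\mathcal{P})|=\sum_{i}e_{i}$ with each $e_{i}\geq m-1$ (because $T_{i}/\mathcal{P}$ is a connected graph on $m$ vertices). The strict inequality $|E(\mathcal{P})|>(k+\frac{d-1}{d})(m-1)$ enforced by the $\nu_{f}$ hypothesis then makes $e_{i}>m-1$ for some $i$, so $T_{i}/\mathcal{P}$ contains a cycle. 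Picking an edge $e$ on that cycle, I would trace the complementary side of the cycle to locate a part $P\in\mathcal{P}$ straddling the two components of $T_{i}-e$ and then, along the unique $F$-path in $P$ between the straddling vertices, exhibit an edge $f$ for which the swap $T_{i}\mapsto T_{i}-e+f$, $F\mapsto F-f+e$ preserves the packing and either strictly increases $|E(F)|$ or strictly improves the lexicographic component profile of $F$. In either case the extremality of the chosen packing is contradicted, unless the swap is \emph{blocked}; and the blockage is shown to close into a refined partition $\mathcal{P}^{\prime}$ of $V(G)$ with $|E(\mathcal{P}^{\prime})|\leq(k+\frac{d-1}{d})(|\mathcal{P}^{\prime}|-1)$, contradicting the hypothesis directly. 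This yields $|E(F)|>\frac{d-1}{d}(n-1)$; the component-structure conclusion follows from the same exchange engine, the lexicographic tie-breaker preventing every component of $F$ from having fewer than $d$ edges unless $F$ is itself a spanning tree.

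The main obstacle is the blockage analysis: one must show that whenever the swap $e\leftrightarrow f$ fails to strictly improve either $|E(F)|$ or the lexicographic component profile, the structural residue assembles into an explicit partition $\mathcal{P}^{\prime}$ violating $\nu_{f}(G)>k+\frac{d-1}{d}$. This requires an alternating/augmenting chain running through $T_{1},\ldots,T_{k},F$, analogous to the closing argument central to Theorem~\ref{NDT}. Finally, the sharpness of the bound $k+\frac{d-1}{d}$ is established by an explicit construction: one can take several copies of a block whose fractional packing number is $k+1$ and identify them at a common vertex with multiplicities tuned so that the overall fractional packing number drops to exactly $k+\frac{d-1}{d}$, a graph in which no admissible packing $(T_{1},\ldots,T_{k},F)$ can exist.
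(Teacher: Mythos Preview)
The paper does \emph{not} prove Theorem~\ref{undirected-version}: it is quoted from \cite{FY-24} as motivation, and the paper's own contribution is the directed analogue, Theorem~\ref{directed-version}. So there is no ``paper's own proof'' to compare against for this statement. What can be compared is the paper's proof of Theorem~\ref{directed-version}, whose architecture is entirely different from yours: it proceeds by induction on $|V|$, invokes Lemma~\ref{k+extra} (a packing lemma that \emph{prescribes the number of roots} of the extra branching) to obtain directly a $c$-branching with $c=\lceil(|V|-1)/d\rceil$, and then runs a case analysis using contraction (Claim~\ref{contraction}) and the PIEO uncrossing machinery to secure the ``large component'' clause. In that scheme the edge-count inequality $|A(F)|>\tfrac{d-1}{d}(|V|-1)$ is immediate from $|A(F)|=|V|-c$ and $c=\lceil(|V|-1)/d\rceil$; no exchange argument is needed for it.

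Your plan, by contrast, relies on an exchange engine, and there is a concrete gap in it. The swap you describe, $T_i\mapsto T_i-e+f$ and $F\mapsto F-f+e$, \emph{necessarily preserves} $|E(F)|$: one edge leaves $F$ and one enters. So the alternative ``strictly increases $|E(F)|$'' in your dichotomy never occurs, and the edge-count conclusion $|E(F)|>\tfrac{d-1}{d}(n-1)$ cannot be extracted from this swap at all. What the swap does is trade a split of the straddling part $P$ against a merge of the two parts joined by $e$; there is no reason this should improve the lexicographic profile in general (if $P$ is large and the merged parts are small, the profile gets worse). You acknowledge that the heart of the matter is the ``blockage analysis,'' but as written the plan gives no mechanism that forces a violating partition when the swap fails to improve---and since the swap never improves $|E(F)|$, the entire edge-count half of the theorem is unaccounted for. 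The sharpness sketch is likewise too vague: identifying blocks at a vertex does not obviously calibrate $\nu_f$ to $k+\tfrac{d-1}{d}$. If you want to pursue an approach parallel to the paper's, the undirected analogue of Lemma~\ref{k+extra} (packing $k$ spanning trees plus a spanning forest with a prescribed number of components) is the missing ingredient; that is what makes the edge count automatic and reduces the problem to locating one large component.
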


In this paper, we give a digraphic version of Theorem~\ref{undirected-version}, also acting as a fractional  version of Theorem~\ref{packing-spanning-arborescence}.
The {\em fractional packing number} $\nu_{f}(D)$ of a digraph $D$ is defined by
$$\nu_{f}(D):=\min_{|\mathcal{P}|>1, \mathcal{P} \in \mathcal{D}(V)} \frac{\sum_{X \in \mathcal{P}}d_{D}^{-}(X)}{|\mathcal{P}|-1}.$$

\begin{theorem}\label{directed-version}
Let $D$ be a digraph, $k \geq 0$, $d \geq 1$ be integers.  If $\nu_{f}(D) > k+ \frac{d-1}{d}$, then $D$ contains arc disjoint $k$  spanning arborescences and another branching $F$ with
$|A(F)|> \frac{d-1}{d}(|V(D)|-1)$ , and if $F$ is not a spanning arborescence, then $F$ has a component with at least $d$ arcs. Moreover, the bound of $\nu_{f}(D)  $ is sharp.
\end{theorem}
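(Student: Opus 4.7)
The plan is to mimic the strategy used by Fang and Yang for Theorem~\ref{undirected-version} in the directed setting, with Theorem~\ref{packing-spanning-arborescence} playing the role of the Nash-Williams-Tutte theorem. Since $\nu_{f}(D) > k + \frac{d-1}{d} \geq k$, that theorem supplies $k$ arc-disjoint spanning arborescences of $D$. Among all tuples $(T_{1},\ldots,T_{k},F)$ consisting of $k$ arc-disjoint spanning arborescences $T_{i}$ of $D$ together with a branching $F$ contained in $D-\bigcup_{i}A(T_{i})$, I first fix one that maximizes $|A(F)|$ and, among those, one that maximizes the decreasing sequence of component sizes of $F$ in the lexicographic order.

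To establish the inequality $|A(F)| > \frac{d-1}{d}(|V(D)|-1)$, suppose for contradiction that it fails, and let $\mathcal{P}$ be the partition of $V(D)$ whose blocks are the vertex sets of the components of $F$ (isolated vertices counted as singleton blocks), so $|\mathcal{P}| = |V(D)| - |A(F)|$. Since each $T_{i}$ is a spanning arborescence and $\mathcal{P}$ is a partition, contracting the blocks of $\mathcal{P}$ in $T_{i}$ yields $\sum_{X \in \mathcal{P}} d_{T_{i}}^{-}(X) = |\mathcal{P}|-1$, and every arc of $F$ lies inside a block, so $\sum_{X \in \mathcal{P}} d_{F}^{-}(X) = 0$. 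Subtracting these contributions from the bound $\sum_{X \in \mathcal{P}} d_{D}^{-}(X) > (k+\tfrac{d-1}{d})(|\mathcal{P}|-1)$ guaranteed by the hypothesis leaves strictly more than $\frac{d-1}{d}(|\mathcal{P}|-1)$ arcs in $D - A(F) - \bigcup_{i}A(T_{i})$ that cross $\mathcal{P}$. The heart of the argument is to use such a crossing arc $(u,v)$ to augment $F$: if the head $v$ is a root of its component in $F$, one simply appends $(u,v)$ to $F$; otherwise one follows an alternating augmenting path through the partition matroid (in-degree at most one) and the graphic matroid of the underlying graph, possibly rerouting some $T_{i}$ by a fundamental-cycle swap so that an arc freed from $T_{i}$ can enter $F$ in place of $(u,v)$. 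This matroid-intersection-style augmentation, adapted to the directional constraints imposed by the choice of roots, is where I expect the main technical difficulty.

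For the component-size assertion, I use the same extremal tuple and assume that $F$ is not a spanning arborescence yet contains a component $C$ with $|A(C)| < d$, hence $|V(C)| \leq d$. Refining $\mathcal{P}$ by splitting $V(C)$ into singletons increases $|\mathcal{P}|-1$ by $|A(C)|$ and preserves the identity $\sum_{X \in \mathcal{P}} d_{T_{i}}^{-}(X) = |\mathcal{P}|-1$ for each $T_{i}$, so the same counting still produces enough crossing arcs to perform either an augmenting step (raising $|A(F)|$) or a reshaping step (enlarging the largest component of $F$ while keeping $|A(F)|$ constant), contradicting the lexicographic choice. For the sharpness of the bound on $\nu_{f}(D)$, I would construct a digraphic analogue of the extremal example in Theorem~\ref{undirected-version}, obtained essentially by orienting its undirected extremal graph so that each directed cut inherits the tight value $k + \frac{d-1}{d}$.
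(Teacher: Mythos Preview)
Your plan has a real gap at the first counting step. The identity $\sum_{X \in \mathcal{P}} d_{T_i}^{-}(X) = |\mathcal{P}|-1$ is false in general: contracting the blocks of $\mathcal{P}$ in a spanning arborescence $T_i$ yields a connected digraph on $|\mathcal{P}|$ vertices, but not an arborescence unless every block induces a sub-arborescence of $T_i$; the correct statement is only $\sum_{X} d_{T_i}^{-}(X) \geq |\mathcal{P}|-1$. Since your $\mathcal{P}$ is the component partition of $F$ and there is no reason for the $F$-components to be $T_i$-connected, the arborescences may absorb far more than $k(|\mathcal{P}|-1)$ of the crossing in-arcs, and your subtraction no longer guarantees even a single leftover arc of $D - A(F) - \bigcup_i A(T_i)$ crossing $\mathcal{P}$. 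Without that arc the augmentation has nothing to start from, and the ``rerouting some $T_i$ by a fundamental-cycle swap'' that you defer to is exactly the hard part of the problem --- making it rigorous would amount to proving a simultaneous packing statement of the type the paper imports as Lemma~\ref{k+extra}. The component-size argument inherits the same defect.

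The paper takes a different route and does not argue by augmentation. It invokes Lemma~\ref{k+extra} (a specialization of Lemma~\ref{pack-branching}) to produce in one shot $k$ spanning arborescences together with a spanning $c$-branching for $c = \lceil (|V|-1)/d \rceil$; the hypothesis $\nu_f(D) > k + \frac{d-1}{d}$ directly verifies the required inequalities. If inequality~(\ref{frac-part}) is never tight one can take $c-1$ components and finish by pigeonhole. In the tight case the proof runs by induction on $|V(D)|$: after choosing a packing that minimizes $|V(T_{k+1}^{c})|$, a contraction claim (Claim~\ref{contraction}) shows it suffices to locate a set $W$ with $|W| \geq 2$ on which $D[W]$ already contains $k+1$ arc-disjoint spanning arborescences with prescribed roots, contract $W$, and lift the packing obtained by induction back to $D$. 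Locating such a $W$ (or else directly exhibiting a component with at least $d$ arcs) is done by a case analysis that re-applies Lemma~\ref{k+extra} to $D - \bigcup_{i=1}^{c-1} A(T_{k+1}^{i})$ and uses a submodular uncrossing (the PIEO$^{1}$ operations of Section~2) to pin down a minimal tight set $X_0 \subseteq V(T_{k+1}^{c})$.
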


\begin{theorem}[\cite{F-09}]\label{packing-spanning-arborescence}
A digraph $D$ contains $k$ arc disjoint spanning arborescences if and only if $\nu_{f}(D) \geq k$.
\end{theorem}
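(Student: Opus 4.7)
My plan is to reduce Theorem~\ref{directed-version} to Frank's packing theorem (Theorem~\ref{packing-spanning-arborescence}) via a super-root construction combined with exchange arguments. Let $n=|V(D)|$. Since $\nu_{f}(D)>k$, Theorem~\ref{packing-spanning-arborescence} gives $k$ arc-disjoint spanning arborescences $T_1,\ldots,T_k$ of $D$. Let $D'=D-\bigcup_{i=1}^{k}A(T_i)$ and form $(D')^+$ by adjoining a new vertex $s$ with $k+n$ parallel arcs $s\to v$ for each $v\in V(D)$. Any spanning arborescence $T_0$ of $(D')^+$ rooted at $s$ projects to a $c$-branching $F=T_0-s$ of $D'$ with $|A(F)|=n-c$, where $c$ is the number of $s$-arcs used by $T_0$. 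I would then minimise $c$ over all such packings $(T_1,\ldots,T_k,T_0)$; equivalently, this maximises $|A(F)|$.

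To derive the arc-count inequality $|A(F)|>\tfrac{d-1}{d}(n-1)$, I would apply the hypothesis $\nu_{f}(D)>k+\tfrac{d-1}{d}$ to the partition $\mathcal{P}_F=\{W_1,\ldots,W_c\}$ of $V(D)$ by the component vertex sets of $F$. Each spanning arborescence $T_i$ crosses $\mathcal{P}_F$ in at least $c-1$ arcs (as $T_i[W_j]$ is a forest with at most $|W_j|-1$ arcs and these sum to at most $n-c$), while $F$ crosses $\mathcal{P}_F$ in $0$ arcs. Combined with $\sum_j d_D^-(W_j)>(k+\tfrac{d-1}{d})(c-1)$, this forces a surplus of either $T_i$-crossings over $\mathcal{P}_F$ or ``reservoir'' arcs (those not in any $T_i$ or $F$) crossing $\mathcal{P}_F$. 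The $c$-minimality of the packing rules out reservoir arcs pointing to roots of $F$ from different components (since such an arc could be absorbed into $F$, merging two components and decreasing $c$), so the reservoir lives on non-root targets; pigeonholing this bound against the $n-c$ non-root vertices yields $c<\tfrac{n+d-1}{d}$, hence $|A(F)|>\tfrac{d-1}{d}(n-1)$.

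The main obstacle will be the component-size condition: when $F$ is not a spanning arborescence, some component of $F$ has at least $d$ arcs. I would argue by contradiction; assume every component of $F$ has at most $d-1$ arcs, equivalently $|W_j|\leq d$ for every $j$. Refining $\mathcal{P}_F$ by breaking each small component into singletons yields the subpartition $\mathcal{Q}=\{\{v\}:v\in V(D)\}$. A careful recounting of the in-degree sum $\sum_v d_D^-(\{v\})=|A(D)|=k(n-1)+|A(F)|+r'$, together with bounds on $r'$ derived from the small-component assumption and the $c$-minimality (which caps reservoir arcs internal to a component at $|W_j|-1\leq d-1$ per component and limits reservoir arcs entering non-root vertices via a degree-counting argument), should produce $|A(D)|\leq(k+\tfrac{d-1}{d})(n-1)$, contradicting the hypothesis. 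The delicate arithmetic balancing reservoir arcs against component sizes is the technical heart of the proof. For sharpness, a construction mirroring the undirected extremal example in~\cite{FY-24} works: glue $\lceil(n-1)/d\rceil$ tight $d$-vertex ``blocks'' (digraphs with just enough internal arcs to saturate $\nu_{f}=k+\tfrac{d-1}{d}$) to a shared root structure, producing a digraph with $\nu_{f}(D)=k+\tfrac{d-1}{d}$ that admits no branching $F$ simultaneously satisfying the arc-count bound and the component-size condition.
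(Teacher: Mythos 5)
You have not addressed the statement in question at all. The statement you were asked to prove is Theorem~\ref{packing-spanning-arborescence} --- Frank's classical characterisation of digraphs with $k$ arc-disjoint spanning arborescences ($\exists\,k$ arc-disjoint spanning arborescences $\iff \nu_f(D)\geq k$). This is a cited result from~\cite{F-09} and the paper offers no proof of it. Your proposal, however, opens with ``My plan is to reduce Theorem~\ref{directed-version} to Frank's packing theorem (Theorem~\ref{packing-spanning-arborescence})'' and proceeds to sketch a proof of the paper's main result, Theorem~\ref{directed-version}, while invoking Theorem~\ref{packing-spanning-arborescence} as a black box in the very first step. That is, you have assumed the statement you were supposed to prove and used it to attack a different theorem.

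If your target had instead been Theorem~\ref{directed-version}, it is worth noting that the paper does not go through the super-root construction plus a $c$-minimisation that you describe. The paper instead works directly from Lemma~\ref{pack-branching} (the Gao--Yang root-constrained branching-packing lemma, which strictly generalises Frank's theorem), proceeds by induction on $|V(D)|$ with a contraction reduction (Claim~\ref{contraction}), fixes a minimal last component $T_{k+1}^c$, and uses submodularity of the in-degree function together with the properly intersecting elimination operation to locate a tight set $X_0$ for the contraction step. Your sketch does gesture at the right flavour of exchange arguments and tight-set analysis, but the steps marked ``careful recounting'' and ``delicate arithmetic'' are precisely where the paper brings in Lemma~\ref{pack-branching} and the PIEO/submodularity machinery; without a concrete replacement for that, the argument is not a proof. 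In any case, for the statement actually posed --- Frank's theorem --- you would need to supply an independent proof (e.g.\ via Edmonds' arborescence packing theorem and a standard reduction, or via submodular flows), rather than cite it.
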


The paper is organized as follows: we introduce some lemmas and the so called ``properly intersecting elimination operation'' in Section 2, prove Theorem~\ref{directed-version} in Section 3 and  conclude with an open conjecture for the mixed graphic generalization of Theorem~\ref{undirected-version} and \ref{directed-version} in Section 4.

\section{Preliminaries}

\begin{lemma}[\cite{GY-21}]\label{pack-branching}
Let $D=(V, A)$ be a digraph. $k \geq 0$ be an integer, $c_{1}, c_{2}, \ldots, c_{k+1} \leq |V|$ be integers and $U_{1}, U_{2}, \ldots, U_{k+1} \subseteq V$ be such that $|U_{i}| \leq c_{i}$ for $1 \leq i \leq k+1$. Then there exist arc disjoint $k+1$ spanning branchings $F_{1}, F_{2}, \ldots, F_{k+1}$ such that $R(F_{i}) \supseteq U_{i}$ and $|R(F_{i})|=c_{i}$ for $1 \leq i \leq k+1$, if and only if, for any $I \subseteq [k+1]$ and any $\mathcal{P} \in \mathcal{D}(V)$,
\begin{equation}\label{condition-pack-branching}
\sum_{X \in \mathcal{P}}d_{A}^{-}(X) \geq \sum_{X \in \mathcal{P}}|P_{I}(X)|-\sum_{i \in I}(c_{i}-|U_{i}|),
\end{equation}
where $P_{I}(X):= \{ i \in I: X \cap U_{i}=\emptyset\}$.
\end{lemma}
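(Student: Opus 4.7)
The plan is to prove necessity by a direct counting argument and sufficiency by reducing to a classical arborescence-packing theorem (Edmonds/Frank).

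For necessity, assume the arc-disjoint branchings $F_1,\dots,F_{k+1}$ with the prescribed root data exist, and fix $I\subseteq[k+1]$ and $\mathcal{P}\in\mathcal{D}(V)$. For each $i\in I$ and each $X\in\mathcal{P}$ with $X\cap U_i=\emptyset$, one of two alternatives must hold: either $X$ contains a root of $F_i$, which then lies in $R(F_i)\setminus U_i$, or $F_i$ sends at least one arc into $X$. Since the members of $\mathcal{P}$ are pairwise disjoint, at most $|R(F_i)\setminus U_i|=c_i-|U_i|$ of the $X$'s can absorb a root of $F_i$; the rest force arcs into $X$, and by arc-disjointness the total arc contribution summed over $i\in I$ is at most $\sum_{X\in\mathcal{P}}d^-_A(X)$. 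Adding the two bounds and observing that $\sum_{X\in\mathcal{P}}|P_I(X)|=\sum_{i\in I}|\{X\in\mathcal{P}:X\cap U_i=\emptyset\}|$ yields~(\ref{condition-pack-branching}).

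For sufficiency, the natural route is to build an auxiliary digraph $D^{\ast}$ from $D$ by attaching $k+1$ new source vertices $s_1,\dots,s_{k+1}$; for each $i$, join $s_i$ by a mandatory arc to every $v\in U_i$ and by one available arc to every $v\in V\setminus U_i$, with a capacity that forces the out-degree of $s_i$ used by the packing to equal exactly $c_i$. An arc-disjoint packing of spanning arborescences of $D^{\ast}$ rooted respectively at the $s_i$ then corresponds, after deleting the $s_i$, to the desired branchings in $D$: the root set of $F_i$ is the out-neighborhood of $s_i$ in the $i$-th arborescence, which contains $U_i$ and has size exactly $c_i$. The existence of such a packing in $D^{\ast}$ is governed by the Edmonds/Frank deficiency condition on subpartitions of $V(D^{\ast})$, and the main work is to verify that this condition on $D^{\ast}$ unravels, after grouping each candidate subpartition into its trace on $V$ and the set $I\subseteq[k+1]$ of source vertices it separates off, into exactly~(\ref{condition-pack-branching}) — with the $(c_i-|U_i|)$ terms on the right-hand side arising precisely from the slack on $s_i$'s out-capacity.

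The main obstacle is the bookkeeping in this reduction: one has to design $D^{\ast}$ so that both $R(F_i)\supseteq U_i$ and $|R(F_i)|=c_i$ are enforced simultaneously, and that every violating subpartition of $V(D^{\ast})$ pulls back to a pair $(I,\mathcal{P})$ violating~(\ref{condition-pack-branching}), while no spurious violations are introduced by the gadget. An alternative that avoids this bookkeeping is to argue via matroid union: the arc sets of admissible $F_i$ form the bases of a truncation of the associated branching matroid, and arc-disjoint packing becomes the existence of a common independent set of maximum rank in the union of these matroids; the Edmonds matroid union rank formula then translates directly into the inequality~(\ref{condition-pack-branching}) when specialized to subpartitions of $V$.
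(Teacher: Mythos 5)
This lemma is imported from~\cite{GY-21}; the paper under review does not reprove it, so there is no in-paper argument to compare against. Evaluating your attempt on its own merits: the necessity direction is correct and is essentially the standard counting argument. For fixed $i\in I$ and each $X\in\mathcal{P}$ with $X\cap U_i=\emptyset$, since $F_i$ is spanning, either $X$ contains a root of $F_i$ (necessarily in $R(F_i)\setminus U_i$) or $d^-_{F_i}(X)\ge 1$; disjointness of the members of $\mathcal{P}$ caps the first alternative at $c_i-|U_i|$ occurrences, and arc-disjointness of the $F_i$ then gives $\sum_{X}d^-_A(X)\ge\sum_{i\in I}\sum_{X}d^-_{F_i}(X)\ge\sum_X|P_I(X)|-\sum_{i\in I}(c_i-|U_i|)$. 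That half is fine.

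The sufficiency direction is where the proposal does not close. You describe two possible routes (an auxiliary source-vertex gadget reduced to an Edmonds/Frank packing theorem, or a matroid union formulation) but carry out neither, and you explicitly flag the bookkeeping as the ``main work'' and the ``main obstacle.'' The gadget as described is underspecified in exactly the place it needs to be precise: Edmonds' disjoint arborescences theorem and its subpartition-deficiency refinements by Frank do not, by themselves, let you impose both a lower constraint ($R(F_i)\supseteq U_i$) and an exact cardinality constraint ($|R(F_i)|=c_i$) simply by ``a capacity that forces the out-degree of $s_i$ to equal exactly $c_i$'' — enforcing exact equality rather than an upper or lower bound is the nontrivial part and is precisely what makes the condition~(\ref{condition-pack-branching}) quantify over all $I\subseteq[k+1]$ (rather than only over $I=[k+1]$). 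Your sketch does not show why the deficiency condition on the augmented digraph pulls back to~(\ref{condition-pack-branching}) and nothing weaker, nor why no spurious tight sets arise at the sources. The matroid-union alternative is likewise only named: one would need to identify the correct matroid (a truncation of a transversal/branching matroid restricted by $U_i$), verify it is indeed a matroid, and compute the union rank formula on the relevant sets — none of which is done. As submitted, the proposal proves one direction and gives a plausible but unverified plan for the other; it should not be regarded as a proof of Lemma~\ref{pack-branching}.
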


In Lemma~\ref{pack-branching}, by setting $c_{1}= \ldots =c_{k}=1$, $c_{k+1}=c$, $U_{1}= \ldots = U_{k}= \emptyset$ and $U_{k+1}=U$, we have the following lemma.

\begin{lemma}\label{k+extra}
Let $D=(V, A)$ be a digraph. $k \geq 1$ be an integer, $c \leq |V|$ be an integer and $U \subseteq V$ be such that $|U| \leq c$. Then there exist arc disjoint $k$ spanning arborescences and another spanning branching $F$ with $R(F) \supseteq U$ and $|R(F)|=c$, if and only if, for any  $\mathcal{P} \in \mathcal{D}(V)$,
\begin{equation}\label{pre-arborescence-part}
\sum_{X \in \mathcal{P}}d_{A}^{-}(X) \geq k(|\mathcal{P}|-1),
\end{equation}
\begin{equation}\label{pre-branching-part}
\sum_{X \in \mathcal{P}}d_{A}^{-}(X) \geq k(|\mathcal{P}|-1)+ \sum_{X \in \mathcal{P}}|P_{k+1}(X)|-(c-|U|),
\end{equation}
where $P_{k+1}(X):= \{ k+1: X \cap U = \emptyset\}$.
\end{lemma}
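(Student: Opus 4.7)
The plan is to deduce Lemma~\ref{k+extra} directly from Lemma~\ref{pack-branching} by specializing the parameters and then showing that, among the exponentially many conditions indexed by $I \subseteq [k+1]$, only two are extremal. Take $c_1 = \cdots = c_k = 1$, $c_{k+1} = c$, $U_1 = \cdots = U_k = \emptyset$, and $U_{k+1} = U$. Observe that under this choice, for every $X \subseteq V$,
\begin{equation*}
|P_I(X)| \;=\; |I \cap [k]| \;+\; \mathbf{1}[k+1 \in I]\cdot\mathbf{1}[X \cap U = \emptyset],
\end{equation*}
since $U_i = \emptyset$ forces $X \cap U_i = \emptyset$ automatically for $i \in [k]$. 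Likewise $c_i - |U_i| = 1$ for $i \in [k]$ and $c_{k+1} - |U_{k+1}| = c - |U|$.

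Next I would rewrite condition (\ref{condition-pack-branching}) using these identities. For $I \subseteq [k+1]$ with $j := |I \cap [k]|$ and $\varepsilon := \mathbf{1}[k+1 \in I]$, the inequality becomes
\begin{equation*}
\sum_{X \in \mathcal{P}} d_A^-(X) \;\geq\; j\bigl(|\mathcal{P}| - 1\bigr) \;+\; \varepsilon\!\left(\sum_{X \in \mathcal{P}} |P_{k+1}(X)| - (c - |U|)\right).
\end{equation*}
The right-hand side is strictly monotone nondecreasing in $j$ (each added index from $[k]$ contributes $|\mathcal{P}| - 1 \geq 0$), so for each fixed value of $\varepsilon \in \{0,1\}$ the strongest inequality occurs at $j = k$. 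Setting $\varepsilon = 0, j = k$ recovers (\ref{pre-arborescence-part}), and setting $\varepsilon = 1, j = k$ recovers (\ref{pre-branching-part}). Conversely, every other choice of $I$ yields a condition that is a weakening of one of these two, so that (\ref{pre-arborescence-part}) and (\ref{pre-branching-part}) together are equivalent to the family (\ref{condition-pack-branching}) under the chosen specialization.

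Finally, invoking Lemma~\ref{pack-branching} with the specialized parameters completes both directions at once: necessity is immediate, since the packing forces all instances of (\ref{condition-pack-branching}); sufficiency follows because (\ref{pre-arborescence-part}) and (\ref{pre-branching-part}) dominate every other instance. The only real obstacle is the bookkeeping in the monotonicity step above — specifically verifying that dropping an index from $[k]$ cannot accidentally sharpen the inequality in the $\varepsilon = 1$ branch — but this is handled cleanly by the sign of $|\mathcal{P}| - 1$. No additional combinatorial input beyond Lemma~\ref{pack-branching} is required.
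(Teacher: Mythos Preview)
Your proposal is correct and follows exactly the same approach as the paper: the paper derives Lemma~\ref{k+extra} from Lemma~\ref{pack-branching} by precisely the specialization $c_1=\cdots=c_k=1$, $c_{k+1}=c$, $U_1=\cdots=U_k=\emptyset$, $U_{k+1}=U$, stating this in a single sentence without spelling out the monotonicity-in-$j$ reduction that you make explicit. Your additional bookkeeping (reducing the $2^{k+1}$ conditions to the two extremal ones at $j=k$, $\varepsilon\in\{0,1\}$) is sound and simply fills in what the paper leaves implicit.
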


Let $V$ be a finite set.
Two subsets $X, Y \subseteq V$ are {\em intersecting} if $X \cap Y \neq \emptyset$ and {\em properly intersecting} if $X \cap Y$, $X \setminus Y$, and $Y \setminus X \neq \emptyset$.
A function $p: 2^{V} \rightarrow \mathbb{Z}$ is  {\em supermodular}, where $2^{V}$ denotes the power set of $V$, if the inequality
\[
p(X)+p(Y) \leq p(X \cup Y)+ p(X \cap Y)
\]
holds for all subsets of $V$. A function $b$ is  {\em submodular} if $-b$ is supermodular.

Let $\mathcal{F}$ be a multiset, which consists of some subsets of $V$ (these subsets do not have to be different). Let $\cup \mathcal{F}$ be the union of elements in $\mathcal{F}$ (then $\cup \mathcal{F} \subseteq V$).
If there exist no properly intersecting pairs in $\mathcal{F}$, then $\mathcal{F}$ is called  {\em laminar}. If there exists a properly intersecting pair $X$ and $Y$ in $\mathcal{F}$, then we obtain $\mathcal{F}'$ from $\mathcal{F}$ by replacing $X$ and $Y $ with $X \cup Y$ and $X \cap Y$. To correspond what have already been  used in  \cite{GY-21,GY-21-spanning}, we name it as   {\em properly intersecting elimination operation of type $1$}, and denote it by  PIEO$^{1}$.

Let $Z_{1}$ and $Z_{2}$ be multisets. Denote by $Z_{1} \uplus Z_{2}$ the {\em multiset union} of $Z_{1}$ and $Z_{2}$, that is, for any $z$, the number of $z$ in $Z_{1} \uplus Z_{2}$ is the total number of $z$ in $Z_{1}$ and $Z_{2}$.

From now on, we suppose $\mathcal{F}_{1}, \mathcal{F}_{2} \in \mathcal{D}(V)$.
We adopt PIEO$^{1}$s in $\mathcal{G}_{0}=\mathcal{F}_{1} \uplus \mathcal{F}_{2}$, step by step, and obtain families   $\mathcal{G}_{0}, \ldots, \mathcal{G}_{i-1},\mathcal{G}_{i},\ldots$ of subsets  of $V$.
As noted in \cite{GY-21,GY-21-spanning}, the process will terminate and suppose the final obtained family is $\mathcal{G}_{n}$.
Let $\mathcal{G}_{i}'$ be the family of maximal elements in $\mathcal{G}_{i}$, and we want all elements of $\mathcal{G}_{i}'$ are distinct from each other (to realize this,  if $X$ is 
a maximal element in $\mathcal{G}_{i}$, no matter how many times, we add exactly one X to  $\mathcal{G}_{i}'$).
Define $\mathcal{F}_{3}:= \mathcal{G}_{n}' $ and $\mathcal{F}_{4}:=\mathcal{G}_{n} \setminus \mathcal{F}_{3} $.
Also as noted in \cite{GY-21,GY-21-spanning}, 	$\mathcal{F}_{3}, \mathcal{F}_{4} \in \mathcal{D}(V)$ and $ |\mathcal{F}_{1}|+ |\mathcal{F}_{2}|= |\mathcal{F}_{3}|+ |\mathcal{F}_{4}|$.

\section{Proof of Theorem~\ref{directed-version}}

We prove Theorem~\ref{directed-version} by induction on $|V(D)|$.

Since $\nu_{f}(D)>k+\frac{d-1}{d}$, we have for any $\mathcal{P} \in \mathcal{D}(V)$, $\frac{\sum_{X \in \mathcal{P}} d_{A}^{-}(X)}{|\mathcal{P}|-1}>k+\frac{d-1}{d}$. Thus for any $\mathcal{P} \in \mathcal{D}(V)$, we have
\begin{equation}\label{integer-part}
\sum_{X \in \mathcal{P}} d_{A}^{-}(X)>k(|\mathcal{P}|-1),
\end{equation}
\begin{equation}\label{frac-part-mid}
\begin{split}
\sum_{X \in \mathcal{P}} d_{A}^{-}(X)
& >(k+\frac{d-1}{d})(|\mathcal{P}|-1)\\
& =k(|\mathcal{P}|-1)+ |\mathcal{P}|-(1+\frac{1}{d}(|\mathcal{P}|-1))\\
& \geq k(|\mathcal{P}|-1)+ |\mathcal{P}|-(1+\frac{1}{d}(|V|-1))~\text{(since $|\mathcal{P}|\leq |V|$)}.
\end{split}
\end{equation}
Set $c:= \lceil \frac{|V|-1}{d} \rceil$. Then since $\sum_{X \in \mathcal{P}} d_{A}^{-}(X) $ is integral, by (\ref{frac-part-mid}),   we have for any $\mathcal{P} \in \mathcal{D}(V)$,
\begin{equation}\label{frac-part}
\sum_{X \in \mathcal{P}} d_{A}^{-}(X) \geq k(|\mathcal{P}|-1)+ |\mathcal{P}|-c.
\end{equation}
By Lemma~\ref{k+extra}, setting $U:= \emptyset$, it follows from (\ref{integer-part}) and (\ref{frac-part}) that there exist arc disjoint $k$ spanning arborescences $T_{1}, \ldots, T_{k}$ and another spanning  $c$-branching $F$ in $D$. If $c=1$, then $F$ is  spanning, completing the proof. Next, suppose $c \geq 2$.

\textbf{Case 1}: The equality of (\ref{frac-part}) doesn't hold, that is, for any $\mathcal{P} \in \mathcal{D}(V)$, we have
\begin{equation}\label{c-1}
\sum_{X \in \mathcal{P}} d_{A}^{-}(X) \geq k(|\mathcal{P}|-1)+ |\mathcal{P}|-(c-1).
\end{equation}
By Lemma~\ref{k+extra}, setting $U:= \emptyset$, it follows from (\ref{integer-part}) and (\ref{c-1}) that there exist $k$ spanning arborescences and another spanning  $(c-1)$-branching $F_{1}$ in $D$. Then $|A(F_{1})|=|V|-(c-1)>\frac{d-1}{d} (|V|-1)$. If $c=2$, then $F_{1}$ is  spanning; if $c \geq 3$, since $\frac{|V|}{c-1} >\frac{|V|}{ \frac{|V|-1}{d}}>d$, $F_{1}$ has a component with at least $d+1$ vertices and thus at least $d$ arcs, completing the proof.

\textbf{Case 2}: There exists some $\mathcal{P}_{0} \in \mathcal{D}(V)$ that witnesses the equality of (\ref{frac-part}).

\begin{claim}\label{contraction}
Let $  W \subsetneq V$ be such that
\begin{itemize}
\item[(i)] $|W| \geq 2$,
\item[(ii)] $A^{-}(W)$ consists of $\overrightarrow{x_{1}y_{1}}, \ldots, \overrightarrow{x_{s}y_{s}}$,
\item[(iii)] there exist $k+1$ arc disjoint spanning arborescences in $D[W]$, $s$ of which are rooted at $y_{1}, \ldots, y_{s}$, respectively.
\end{itemize}
Then there exists such a packing as Theorem~\ref{directed-version} demands.
\end{claim}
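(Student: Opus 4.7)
The plan is to contract $W$ into a single vertex $w$, apply the inductive hypothesis (on $|V(D)|$) to the smaller digraph $D' = D/W$, and then splice the arborescences inside $W$ back in. Since $|W| \geq 2$, we have $|V(D')| < |V(D)|$, so induction applies provided the hypothesis on $\nu_{f}$ carries over.

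To verify $\nu_{f}(D') > k + \frac{d-1}{d}$, observe that every subpartition $\mathcal{P}' \in \mathcal{D}(V(D'))$ lifts to a subpartition $\mathcal{P} \in \mathcal{D}(V)$ of the same cardinality by replacing (if applicable) the part containing $w$ with the same part with $w$ swapped for $W$. Since contraction deletes only arcs interior to $W$, $\sum_{X' \in \mathcal{P}'} d_{D'}^{-}(X') = \sum_{X \in \mathcal{P}} d_{D}^{-}(X)$, so $\nu_{f}(D') \geq \nu_{f}(D) > k + \frac{d-1}{d}$. The inductive hypothesis then supplies $k$ arc-disjoint spanning arborescences $T_{1}', \ldots, T_{k}'$ of $D'$ and a further branching $F'$ with $|A(F')| > \frac{d-1}{d}(|V(D')| - 1) = \frac{d-1}{d}(|V(D)| - |W|)$, having a component with at least $d$ arcs whenever $F'$ is not a spanning arborescence of $D'$.

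I now extend these $k+1$ objects to $D$ by pairing each with a distinct arborescence $T_{j}^{W}$ from hypothesis (iii). Each object is incident to $w$ via at most one incoming arc; if that arc corresponds to $\overrightarrow{x_{j}y_{j}}$, I pair the object with $T_{j}^{W}$, which is rooted at $y_{j}$; otherwise (if the object has $w$ as a root, or avoids $w$ entirely in the case of $F'$) I pair it with any unused $T_{j}^{W}$. Replacing each $w$-incident arc of the object by its $D$-counterpart and taking the union with the paired $T_{j}^{W}$ produces arc-disjoint spanning arborescences $T_{1}, \ldots, T_{k}$ and a branching $F$ of $D$; arc-disjointness holds because distinct objects are already arc-disjoint in $D'$ and the $T_{j}^{W}$'s are internally arc-disjoint in $D[W]$.

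Finally I verify the required bounds on $F$. For the size, $|A(F)| = |A(F')| + (|W|-1) > \frac{d-1}{d}(|V(D)|-|W|) + (|W|-1) = \frac{d-1}{d}(|V(D)|-1) + \frac{|W|-1}{d}$, which exceeds $\frac{d-1}{d}(|V(D)|-1)$. For the component condition, $F$ is a spanning arborescence of $D$ if and only if $F'$ is a spanning arborescence of $D'$; otherwise the component of $F'$ with at least $d$ arcs either survives unchanged in $F$ (if disjoint from $w$) or absorbs $T_{j}^{W}$ (if it contains $w$), so $F$ also has a component with at least $d$ arcs. The main obstacle is the bookkeeping in the pairing step, in particular distinguishing whether $w$ lies in $V(F')$ and, if so, whether $w$ is a root or a non-root vertex of its component in $F'$, and checking that the corresponding extensions remain valid arborescences rather than just branchings.
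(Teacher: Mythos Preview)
Your proof is correct and follows essentially the same approach as the paper's: contract $W$, apply the inductive hypothesis to $D/W$, and splice in the $k+1$ internal arborescences via the same pairing rule (match an object using $\overrightarrow{x_jw}$ with the $y_j$-rooted arborescence, and match the rest arbitrarily). Your explicit verification that $\nu_f(D/W)>k+\frac{d-1}{d}$ and your treatment of the case where $F'$ avoids $w$ are details the paper leaves implicit, but the overall argument and the final size and component calculations are identical.
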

\begin{proof}

By induction hypothesis, in $D/W$ (denote by $w$ the contraction vertex of $W $, and $A(D/W)^{-}(w)$ consists of $\overrightarrow{x_{1}w}, \ldots, \overrightarrow{x_{s}w}$), there exist $k$   spanning arborescences $T_{1}', \ldots ,T_{k}'$ and another spanning branching $F'$ with $|A(F')|> \frac{d-1}{d}(|V(D/W)|-1)$, and if $F'$ is not  spanning, then $F'$ has a component with at least $d$ arcs.
Suppose $D[W]$ contains $k+1$  spanning arborescences $T_{1}'', \ldots,  T_{k+1}''$ such that $T_{j}''$ is rooted at $y_{j}$ for $1 \leq j \leq s$.

For $1 \leq i \leq k$, if $d_{T_{i}'}^{-}(w)=1$, say $\overrightarrow{x_{j}w} \in A(T_{i}')$, let $T_{i}^{*}:=(V, A(T_{i}')\cup A(T_{j}'')) $; then $T_{i}^{*}$ is a spanning  arborescence in $D$ with the same root as $T_{i}'$; if $d_{F'}^{-}(w)=1$, the same applies to the above discussion and we obtain a spanning branching $F^{*}$ in $D$ with the same root set as $F'$. For the others in $\{T_{1}', \ldots, T_{k}', F'\}$, there are the same number of arborescneces left in $\{T_{1}'', \ldots, T_{k+1}'' \}$. Fix an arbitrary one-to-one correspondence, say $ T_{i}'$ correspond to $T_{j}''$; let $T_{i}^{*}:=(V, A(T_{i}')\cup A(T_{j}'')) $; then $T_{i}^{*}$ is a spanning  arborescence in $D$ with the same root as $T_{j}''$. Finally, we obtain arc disjoint spanning arborescences $T_{1}^{*}, \ldots, T_{k}^{*}$ and spanning branching $F^{*}$.

Note that $|A(F^{*})|=|A(F')|+|W|-1>\frac{d-1}{d}(|V(D/W)|-1)+|W|-1=\frac{d-1}{d}(|V|-|W|)+|W|-1>\frac{d-1}{d}(|V|-1)$; $F^{*}$ is spanning in $D$ if and only if $F'$ is spanning in $D/W$. If $F^{*}$ is not spanning, then since $F'$ has a component with at least $d$ arcs, $F^{*}$ has a component with at least $d$ arcs.
\end{proof}

Suppose $F$ consists of $c$ arborescences $T_{k+1}^{1}, \ldots, T_{k+1}^{c}$ and $|V(T_{k+1}^{c})|$ is minimum among all such  packings $ \{T'''_{1}$, $\ldots$, $T'''_{k}, F''' \}$ in $D$, where $T'''_{1}$, $\ldots$, $T'''_{k}$ are spanning arborescences, $F'''$ is a spanning  $c$-branching and  $T_{k+1}^{c}$ is a component of $F'''$.

If $|V(T_{k+1}^{c})|=1$, then
\[
\frac{|\cup_{i=1}^{c-1}V(T_{k+1}^{i})|}{c-1}=\frac{|V|-1}{c-1}>\frac{|V|-1}{\frac{|V|-1}{d}}=d.
\]
So there exists some $ 1 \leq i_{0} \leq c-1$ such that $|V(T_{k+1}^{i_{0}})| \geq d+1$, completing the proof. Next, suppose $|V(T_{k+1}^{c})| \geq 2$.
Suppose $r \in V(T_{k+1}^{c})$ is the root of $T_{k+1}^{c}$, and $r_{i} \in V$ is the root of $T_{i}$ for $1 \leq i \leq k$.

\textbf{Case 2.1} $A[\cup_{i=1}^{c-1}V(T_{k+1}^{i}), V(T_{k+1}^{c})]=\emptyset$.

For $ 1 \leq i \leq k$, since $T_{i}$ spans $V$, $r_{i}$ can reach each vertex of $V$ in $T_{i}$.
Since $A[\cup_{i=1}^{c-1}V(T_{k+1}^{i}), V(T_{k+1}^{c})]=\emptyset $,  we have  $r_{i} \in V(T_{k+1}^{c})$ and $T_{i}[V(T_{k+1}^{c}]$ is an arborescence. So there exists $k+1$ spanning arborescences $T_{1}[V(T_{k+1}^{c})], \ldots, T_{k}[V(T_{k+1}^{c}], T_{k+1}^{c}$ in $D[V(T_{k+1}^{c})]$. By Claim~\ref{contraction}, setting $W:=V(T_{k+1}^{c})$ and noting that $A^{-}(W)=\emptyset$, there exists such a packing as Theorem~\ref{directed-version} demands.

\textbf{Case 2.2} $A[\cup_{i=1}^{c-1}V(T_{k+1}^{i}), V(T_{k+1}^{c})] \neq \emptyset$.

Let $F_{2}$ be a spanning branching with arc set $A(F_{2}):=A(T_{k+1}^{c})$ and root set $R(F_{2})=\{ r\} \cup (\cup_{i=1}^{c-1}V(T_{k+1}^{i}))$. Since there exist $k$ spanning arborescences $T_{1}, \ldots, T_{k}$ and a spanning branching $F_{2}$ in $D- \cup_{i=1}^{c-1}A(T_{k+1}^{i})$, by Lemma~\ref{k+extra}, setting $U:= \cup_{i=1}^{c-1}V(T_{k+1}^{i})$, we have the following observation.

\begin{observation}
For any $\mathcal{P} \in \mathcal{D}(V)$, we have
\begin{equation}\label{k-spanning-arborescences}
\sum_{X \in \mathcal{P}}d_{A \setminus \cup_{i=1}^{c-1}A(T_{k+1}^{i})}^{-}(X) \geq k(|\mathcal{P}|-1),
\end{equation}
\begin{equation}\label{extra-branching}
\sum_{X \in \mathcal{P}}d_{A \setminus \cup_{i=1}^{c-1}A(T_{k+1}^{i})}^{-}(X) \geq k(|\mathcal{P}|-1)+\sum_{X \in \mathcal{P}}|P_{k+1}(X)|-1,
\end{equation}
where $P_{k+1}(X):=\{k+1: X \cap U=\emptyset \}$.
\end{observation}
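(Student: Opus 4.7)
The observation is an immediate corollary of the necessity (the ``only if'' direction) of Lemma~\ref{k+extra}, applied to the digraph $D':=D-\cup_{i=1}^{c-1}A(T_{k+1}^{i})$. My plan is simply to verify the hypotheses of that lemma inside $D'$ and read off the two inequalities.

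First I would check that $D'$ admits $k$ arc-disjoint spanning arborescences together with one additional spanning branching whose root set contains the prescribed vertex set. The spanning arborescences $T_{1},\ldots,T_{k}$ already constructed lie entirely in $D'$, because they are arc-disjoint from $F$ and hence from every $T_{k+1}^{i}$. For the extra branching, take $F_{2}$: its arc set equals $A(T_{k+1}^{c})$ and its root set is $\{r\}\cup U$, where $U:=\cup_{i=1}^{c-1}V(T_{k+1}^{i})$. Since $T_{k+1}^{c}$ is an arborescence rooted at $r$ spanning $V(T_{k+1}^{c})$, adjoining the vertices of $U$ as trivial single-vertex components does yield a spanning branching of $V$ with root set exactly $\{r\}\cup U$ of size $|U|+1$, and $F_{2}$ is arc-disjoint from each $T_{i}$.

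Now I would invoke Lemma~\ref{k+extra} in $D'$ with parameters ``$c$'' $:=|U|+1$ and ``$U$'' $:=\cup_{i=1}^{c-1}V(T_{k+1}^{i})$ (an overload of the name $c$ already fixed in the main proof). The existence of the packing $\{T_{1},\ldots,T_{k},F_{2}\}$ in $D'$ forces, by the ``only if'' direction, the inequalities (\ref{pre-arborescence-part}) and (\ref{pre-branching-part}) for every $\mathcal{P}\in\mathcal{D}(V)$; substituting the difference ``$c-|U|$'' $=1$ into the right-hand side turns these into (\ref{k-spanning-arborescences}) and (\ref{extra-branching}) verbatim. There is no real obstacle here: the content of the observation is exactly the translation of the packing we have just exhibited into the language of the cut inequalities supplied by Lemma~\ref{k+extra}. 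The only point needing a moment of care is the check that $F_{2}$, with arc set $A(T_{k+1}^{c})$ and isolated vertices contributing the additional roots $U$, genuinely qualifies as a spanning branching in the sense of the lemma; once that is confirmed the inequalities follow automatically.
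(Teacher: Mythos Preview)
Your proposal is correct and is essentially identical to the paper's own argument: the sentence preceding the observation already constructs $F_{2}$ with arc set $A(T_{k+1}^{c})$ and root set $\{r\}\cup U$, then invokes the necessity direction of Lemma~\ref{k+extra} in $D-\cup_{i=1}^{c-1}A(T_{k+1}^{i})$ with $U:=\cup_{i=1}^{c-1}V(T_{k+1}^{i})$ and the Lemma's parameter ``$c$'' equal to $|U|+1$, so that $c-|U|=1$. Your extra care in verifying that $F_{2}$ really is a spanning branching and that the $T_{i}$ survive in $D'$ only makes explicit what the paper leaves implicit.
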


Denote $\mathcal{D}_{1}(V):=\{\mathcal{P} \in \mathcal{D}(V): \mathcal{P}~ \text{witnesses  the equality of (\ref{k-spanning-arborescences})} \}$, and $\mathcal{D}_{2}(V)$ $:=$ $\{\mathcal{P}$ $\in \mathcal{D}(V): \mathcal{P}~\text{witnesses the equality of (\ref{extra-branching})} \}$.

\begin{claim}\label{equality-condition}
(i)  For any $\mathcal{P} \in \mathcal{D}_{1}(V) $, there exists at most one  $X \in \mathcal{P} $ such that $X \cap V(T_{k+1}^{c}) \neq \emptyset$.\\
(ii)  For any $X \in \mathcal{P} \in \mathcal{D}_{2}(V)$, $X \subseteq U$ or $X \subseteq V(T_{k+1}^{c})$.
\end{claim}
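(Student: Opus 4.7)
The plan is to exploit the arc-disjoint packings already sitting inside $A':=A \setminus \bigcup_{i=1}^{c-1}A(T_{k+1}^{i})$: namely $T_{1},\ldots,T_{k},T_{k+1}^{c}$ for part (i) (the last being a component of $F$ and therefore arc-disjoint from the other $T_{j}$ and from the $T_{k+1}^{i}$ with $i<c$) and $T_{1},\ldots,T_{k},F_{2}$ for part (ii). In each case I would bound $\sum_{X \in \mathcal{P}} d_{A'}^{-}(X)$ from below by summing the crossings contributed by each spanning piece and then compare with the tightness hypothesis. The single ingredient I need first is the elementary inequality that, for any spanning arborescence $T$ of a ground set $W$ and any subpartition $\mathcal{Q} \in \mathcal{D}(W)$,
\[
\sum_{X \in \mathcal{Q}} d_{T}^{-}(X) \geq |\mathcal{Q}|-1,
\]
which I would prove by observing that for each $X \in \mathcal{Q}$ not containing the root $r$ of $T$, the directed path in $T$ from $r$ to any vertex of $X$ starts outside $X$ and must contain at least one arc entering $X$; at most one part of $\mathcal{Q}$ can contain $r$.

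Applied to $T_{1},\ldots,T_{k}$ this yields $\sum_{X \in \mathcal{P}} d_{T_{1}\cup\cdots\cup T_{k}}^{-}(X) \geq k(|\mathcal{P}|-1)$. Applied inside $V(T_{k+1}^{c})$ with the subpartition $\mathcal{P}_{c}:=\{X \cap V(T_{k+1}^{c}):X \in \mathcal{P},\,X \cap V(T_{k+1}^{c}) \neq \emptyset\}$ of $V(T_{k+1}^{c})$, and using that every arc of $T_{k+1}^{c}$ has both endpoints in $V(T_{k+1}^{c})$ so that $\sum_{X \in \mathcal{P}} d_{T_{k+1}^{c}}^{-}(X)=\sum_{Y \in \mathcal{P}_{c}} d_{T_{k+1}^{c}}^{-}(Y)$, it yields $\sum_{X \in \mathcal{P}} d_{T_{k+1}^{c}}^{-}(X) \geq m_{c}-1$, where $m_{c}:=|\mathcal{P}_{c}|$ is the number of parts of $\mathcal{P}$ meeting $V(T_{k+1}^{c})$.

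For (i), arc-disjointness of $T_{1},\ldots,T_{k},T_{k+1}^{c}$ inside $A'$ then gives
\[
\sum_{X \in \mathcal{P}} d_{A'}^{-}(X) \geq k(|\mathcal{P}|-1)+(m_{c}-1),
\]
and the tightness of (\ref{k-spanning-arborescences}) forces $m_{c}\leq 1$, which is (i). For (ii), since $A(F_{2})=A(T_{k+1}^{c})$ (the isolated roots of $F_{2}$ in $U$ carry no arcs), I get $\sum_{X \in \mathcal{P}} d_{F_{2}}^{-}(X)=\sum_{X \in \mathcal{P}} d_{T_{k+1}^{c}}^{-}(X)\geq m_{c}-1$, and arc-disjointness of $T_{1},\ldots,T_{k},F_{2}$ inside $A'$ again yields $\sum_{X \in \mathcal{P}} d_{A'}^{-}(X) \geq k(|\mathcal{P}|-1)+(m_{c}-1)$. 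Matching this against the tightness $\sum_{X \in \mathcal{P}} d_{A'}^{-}(X)=k(|\mathcal{P}|-1)+n_{c}-1$ for $\mathcal{P} \in \mathcal{D}_{2}(V)$, where $n_{c}:=\sum_{X \in \mathcal{P}} |P_{k+1}(X)|$ counts the parts of $\mathcal{P}$ contained in $V(T_{k+1}^{c})$, produces $n_{c}\geq m_{c}$; since trivially $n_{c}\leq m_{c}$ this forces $n_{c}=m_{c}$, so every part that meets $V(T_{k+1}^{c})$ is actually contained in it, giving (ii).

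The two parts are essentially parallel and no step is deep; the one point asking for a little care is the subpartition version of the spanning-arborescence inequality above, which I would verify by splitting on whether the root of the arborescence lies in $\cup\mathcal{Q}$ or not (in the latter case the bound even improves to $|\mathcal{Q}|$, which is why the argument is robust against $r$ escaping to $V(T_{k+1}^{c})\setminus\cup\mathcal{P}$).
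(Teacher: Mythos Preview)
Your proposal is correct and follows essentially the same approach as the paper's proof: both use the spanning arborescences $T_{1},\ldots,T_{k}$ to account for the $k(|\mathcal{P}|-1)$ term and then extract extra in-arcs from $T_{k+1}^{c}$ by the ``at most one part can contain the root'' argument. The only differences are presentational: you package the root argument as a reusable subpartition inequality and argue (ii) directly via the comparison $n_{c}\ge m_{c}\ge n_{c}$, whereas the paper handles both parts by contradiction, singling out one or two offending sets and producing one surplus crossing.
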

\begin{proof}
Let $ \mathcal{P} \in \mathcal{D}(V)$.  For any $X \in \mathcal{P}$ and $1 \leq i \leq k$, if $r_{i} \notin X$, then there exists a directed path in $T_{i}$ from $r_{i}$ to $X$; thus $d_{T_{i}}^{-}(X) \geq 1$. Since $r_{i}$ belongs to at most one $X \in \mathcal{P}$, we have $\sum_{X \in \mathcal{P}}d_{T_{i}}^{-}(X) \geq |\mathcal{P}|-1$ and
\begin{equation}\label{arborescence-part}
\sum_{X \in \mathcal{P}}d_{\cup_{i=1}^{k}A(T_{i})}^{-}(X) \geq k(|\mathcal{P}|-1).
\end{equation}

(i) Suppose $\mathcal{P} \in \mathcal{D}_{1}(V)$, and suppose to the contrary that there exist two subsets $X_{1}, X_{2} \in \mathcal{P} $ such that $X_{i} \cap V(T_{k+1}^{c}) \neq \emptyset$ for $i=1, 2$. Since $X_{1} \cap X_{2} = \emptyset$, without loss of generality, suppose $r \notin X_{2}$. Then there exists a directed path in $T_{k+1}^{c}$ from $r$ to $X_{2}$; thus $d^{-}_{T_{k+1}^{c}}(X_{2}) \geq 1$. Since $(\cup_{i=1}^{k}A(T_{i})) \cup A(T_{k+1}^{c}) \subseteq A\setminus \cup_{i=1}^{c-1} A(T_{k+1}^{i})$, we have
\[
\begin{split}
\sum_{X \in \mathcal{P}}d_{A \setminus \cup_{i=1}^{c-1}A(T_{k+1}^{i})}^{-}(X)
& \geq \sum_{X \in \mathcal{P}}d_{\cup_{i=1}^{k}A(T_{i})}^{-}(X)+d^{-}_{T_{k+1}^{c}}(X_{2}) \\
& \geq k(|\mathcal{P}|-1)+ 1 ~~\text{(by (\ref{arborescence-part}) and since $d^{-}_{T_{k+1}^{c}}(X_{2}) \geq 1$)},
\end{split}
\]
contradicting that $\mathcal{P} \in \mathcal{D}_{1}(V)$.

(ii) Suppose $\mathcal{P} \in \mathcal{D}_{2}(V)$. Note that for $X \in \mathcal{P}$,
\[
|P_{k+1}(X)|=
\begin{cases}
1, \text{if $X\cap U = \emptyset$, that is, $X \subseteq V(T_{k+1}^{c})$,} \\
0, \text{else.}\\
\end{cases}
\]
Without loss of generality, suppose that $ \{ X \in \mathcal{P}: X \subseteq V(T_{k+1}^{c}) \}$ consists of $X_{1}, \ldots, X_{t}$. Suppose to the contrary that there exists some $X_{0} \in \mathcal{P} $ such that $X_{0} \cap V(T_{k+1}^{c}) $ and $X_{0} \cap U$ are both nonempty.  For any $0 \leq i \leq t$, if $r \notin X_{i}$, then there exists a directed path in $T_{k+1}^{c}$ from $r$ to $X_{i}$; thus $d^{-}_{T_{k+1}^{c}}(X_{i}) \geq 1$. Since $r$ belongs to at most one of $X_{0}, \ldots, X_{t}$, we have
\begin{equation}\label{branching-part}
 \sum_{i=0}^{t}d^{-}_{T_{k+1}^{c}}(X_{i}) \geq t= \sum_{X \in \mathcal{P}} |P_{k+1}(X)|.
\end{equation}
Hence,
\[
\begin{split}
\sum_{X \in \mathcal{P}}d_{A \setminus \cup_{i=1}^{c-1}A(T_{k+1}^{i})}^{-}(X)
& \geq \sum_{X \in \mathcal{P}}(d_{\cup_{i=1}^{k}A(T_{i})}^{-}(X)+d^{-}_{T_{k+1}^{c}}(X))\\
& \geq k(|\mathcal{P}|-1)+ \sum_{X \in \mathcal{P}} |P_{k+1}(X)|~~\text{(by (\ref{arborescence-part}) and (\ref{branching-part}))},
\end{split}
\]
contradicting that $\mathcal{P} \in \mathcal{D}_{2}(V)$.
\end{proof}

\textbf{Case 2.2.1} There exists some $\overrightarrow{u_{0}v_{0}} \in A[ U, V(T_{k+1}^{c})]$ such that for any
 $X \in \mathcal{P} \in \mathcal{D}_{1}(V)$, $\overrightarrow{u_{0}v_{0}}$ doesn't enter $X$.

Suppose $u_{0} \in V(T_{k+1}^{i_{1}})$ for some $1 \leq i_{1} \leq c-1$. Note that $T_{k+1}^{i_{1}} + \overrightarrow{u_{0}v_{0}}$ is still an arborescence.

\begin{claim}\label{update}
Set $T_{k+1}^{i_{1}}:=T_{k+1}^{i_{1}} + \overrightarrow{u_{0}v_{0}} $ and $U:= U+v_{0}$. For any $\mathcal{P} \in \mathcal{D}(V)$, the inequalities (\ref{k-spanning-arborescences}) and (\ref{extra-branching}) still hold.
\end{claim}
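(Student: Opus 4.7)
The plan is to compare both sides of (\ref{k-spanning-arborescences}) and (\ref{extra-branching}) before and after the update, and to show that none of the admitted changes can break these inequalities. First I would record two elementary observations: setting $T_{k+1}^{i_{1}} := T_{k+1}^{i_{1}} + \overrightarrow{u_{0}v_{0}}$ removes exactly the single arc $\overrightarrow{u_{0}v_{0}}$ from $A \setminus \cup_{i=1}^{c-1} A(T_{k+1}^{i})$, so the left-hand side of each inequality decreases by at most $1$, and by exactly $1$ iff $\overrightarrow{u_{0}v_{0}}$ enters some member $X \in \mathcal{P}$ (that is, $v_{0} \in X$ and $u_{0} \notin X$); and enlarging $U$ to $U+v_{0}$ can only make $\sum_{X \in \mathcal{P}} |P_{k+1}(X)|$ weakly smaller, with a decrease by $1$ precisely when some $X \in \mathcal{P}$ contains $v_{0}$ and previously satisfied $X \cap U = \emptyset$.

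For (\ref{k-spanning-arborescences}), the right-hand side is unaffected by the update, so I need only handle the case where the left-hand side strictly decreases. If $\mathcal{P} \in \mathcal{D}_{1}(V)$, the defining hypothesis of Case 2.2.1 guarantees that $\overrightarrow{u_{0}v_{0}}$ enters no member of $\mathcal{P}$, so the left-hand side is in fact unchanged. If $\mathcal{P} \notin \mathcal{D}_{1}(V)$, integrality provides a slack of at least $1$ before the update, which absorbs the possible drop of $1$.

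For (\ref{extra-branching}) I would case-split on whether $\overrightarrow{u_{0}v_{0}}$ enters some $X \in \mathcal{P}$. If it does not, the left-hand side is unchanged while the right-hand side is either unchanged or smaller, and the inequality is preserved. If it does, say $v_{0} \in X$ and $u_{0} \notin X$, then the left-hand side drops by $1$; this is compensated exactly when $X \cap U = \emptyset$ beforehand, because then the right-hand side also drops by $1$. The only potentially bad subcase is $X \cap U \neq \emptyset$, and here the decisive observation is that $u_{0} \in V(T_{k+1}^{i_{1}}) \subseteq U$ while $v_{0} \in V(T_{k+1}^{c})$, so such an $X$ would meet both $U$ and $V(T_{k+1}^{c})$; Claim~\ref{equality-condition}(ii) forbids this whenever $\mathcal{P} \in \mathcal{D}_{2}(V)$, and if $\mathcal{P} \notin \mathcal{D}_{2}(V)$ the integer slack of at least $1$ again absorbs the drop.

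The main obstacle I anticipate is the coupled change in (\ref{extra-branching}), where both sides move simultaneously and a naive term-by-term bound loses control. The decisive book-keeping is that the structural restrictions on equality-attaining subpartitions supplied by the hypothesis of Case 2.2.1 for (\ref{k-spanning-arborescences}) and by Claim~\ref{equality-condition}(ii) for (\ref{extra-branching}) exactly exclude the configurations in which a net decrease of $1$ could occur.
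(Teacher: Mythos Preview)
Your proposal is correct and follows essentially the same approach as the paper: for (\ref{k-spanning-arborescences}) you use the Case~2.2.1 hypothesis on $\mathcal{D}_{1}(V)$ together with the integer slack for $\mathcal{P}\notin\mathcal{D}_{1}(V)$, and for (\ref{extra-branching}) you use Claim~\ref{equality-condition}(ii) to rule out the dangerous configuration when $\mathcal{P}\in\mathcal{D}_{2}(V)$ and the integer slack when $\mathcal{P}\notin\mathcal{D}_{2}(V)$. The only cosmetic difference is the order of the case split for (\ref{extra-branching}) --- you branch first on whether $\overrightarrow{u_{0}v_{0}}$ enters a member of $\mathcal{P}$ and then on membership in $\mathcal{D}_{2}(V)$, whereas the paper does it the other way around --- but the substance is identical.
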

\begin{proof}
Before the update of $ T_{k+1}^{i_{1}}$, note that
\begin{equation}\label{nonnegative-1}
\sum_{X \in \mathcal{P}}d_{ A \setminus   \cup_{i=1}^{c-1}A(T_{k+1}^{i})}^{-}(X)-k(|\mathcal{P}|-1) \geq
\begin{cases}
0, \text{if $\mathcal{P} \in \mathcal{D}_{1}(V)$},\\
1, \text{if $\mathcal{P} \notin \mathcal{D}_{1}(V)$.}
\end{cases}
\end{equation}
After the update, for any $\mathcal{P} \in \mathcal{D}_{1}(V)$, since $\overrightarrow{u_{0}v_{0}}$ doesn't enter any subset in $\mathcal{P}$ under \textbf{Case 2.2.1}, $\sum_{X \in \mathcal{P}}d_{ A \setminus   \cup_{i=1}^{c-1}A(T_{k+1}^{i})}^{-}(X)$ doesn't change;  for any $\mathcal{P} \notin \mathcal{D}_{1}(V)$,  $\sum_{X \in \mathcal{P}}d_{ A \setminus   \cup_{i=1}^{c-1}A(T_{k+1}^{i})}^{-}(X) $ decreases by at most $1$. So the left side of (\ref{nonnegative-1}) is nonnegative, that is, (\ref{k-spanning-arborescences}) still holds.

Before the update of the $T_{k+1}^{i_{1}}$ and $U$, note that
\begin{equation}\label{nonnegative-2}
\sum_{X \in \mathcal{P}}d_{ A \setminus   \cup_{i=1}^{c-1}A(T_{k+1}^{i})}^{-}(X)-k(|\mathcal{P}|-1)-(\sum_{X \in \mathcal{P}}|P_{k+1}(X)|-1) \geq
\begin{cases}
0, \text{if $\mathcal{P} \in \mathcal{D}_{2}(V)$},\\
1, \text{if $\mathcal{P} \notin \mathcal{D}_{2}(V)$.}
\end{cases}
\end{equation}

For any $\mathcal{P} \in \mathcal{D}_{2}(V)$, if $\overrightarrow{u_{0}v_{0}}$  enters some subset  $X_{1} \in \mathcal{P}$, then $v_{0} \in X_{1} \cap V(T_{k+1}^{c}) $;
by Claim~\ref{equality-condition} (ii),  we have $X_{1} \subseteq V(T_{k+1}^{c})$;
thus $|P_{k+1}(X_{1})|$ decreases by $1$ after the update of $U$, implying  $\sum_{X \in \mathcal{P}}|P_{k+1}(X)|-1$ decreases by $1$. Clearly, $\sum_{X \in \mathcal{P}}d_{ A \setminus   \cup_{i=1}^{c-1}A(T_{k+1}^{i})}^{-}(X)$ decreases by $1$ after the update of $T_{k+1}^{i_{1}}$.
If $\overrightarrow{u_{0}v_{0}}$  doesn't enter any subset  $X \in \mathcal{P}$, $\sum_{X \in \mathcal{P}}d_{ A \setminus   \cup_{i=1}^{c-1}A(T_{k+1}^{i})}^{-}(X)$ and $\sum_{X \in \mathcal{P}}|P_{k+1}(X)|-1$ doesn't change after the update of $T_{k+1}^{i_{1}}$ and $U$.

For any $\mathcal{P} \notin \mathcal{D}_{2}(V) $, after the update, $\sum_{X \in \mathcal{P}}d_{ A \setminus   \cup_{i=1}^{c-1}A(T_{k+1}^{i})}^{-}(X)$ decreases by at most $1$ and $\sum_{X \in \mathcal{P}}|P_{k+1}(X)|-1$ doesn't increase.

Combining all cases above, by (\ref{nonnegative-2}), we have (\ref{extra-branching}) still holds.
\end{proof}

Setting $T_{k+1}^{i_{1}}:=T_{k+1}^{i_{1}} + \overrightarrow{u_{0}v_{0}} $ and $U:= U+v_{0}$,  by Lemma~\ref{k+extra} and Claim~\ref{update}, in $D-\cup_{i=1}^{c-1}A(T_{k+1}^{i})$, there exist $k$ spanning arborescences and another spanning branching $F_{3}$ such that $R(F_{3}) \supseteq U$ and $|R(F_{3})|=|U|+1$. Suppose $R(F_{3}) \setminus U=\{r_{0} \}$.  Then combining $U=\cup_{i=1}^{c-1}V(T_{k+1}^{i})$, $(V, A(F_{3}) \cup (\cup_{i=1}^{c-1}A(T_{k+1}^{i})))$ is a spanning $c$-branching with $ \{r_{1}, \ldots, r_{c-1}, r_{0}\}$ as its root set. Denote by $T_{k+1}^{c,*}$ the component of $F_{3}$, whose root is  $r_{0}$. Then $V(T_{k+1}^{c,*}) \subseteq V \setminus U \subsetneq V(T_{k+1}^{c})$, contradicting the choice of $T_{k+1}^{c}$.

\textbf{Case 2.2.2} For any $a \in A[U, V(T_{k+1}^{c})]$, there exists some $\mathcal{P} \in \mathcal{D}_{1}(V)$ such that $a$ enters some  $X \in \mathcal{P}$.

Let $X_{0} \subseteq V$ be minimal such that there exists $\overrightarrow{u_{1}v_{1}} \in A[U, V(T_{k+1}^{c})]$ and $\mathcal{P}_{1} \in \mathcal{D}_{1}(V)$ such that $\overrightarrow{u_{1}v_{1}}$ enters $X_{0}$ and $X_{0} \in \mathcal{P}_{1}$.

\begin{claim}\label{locate-X0}
$X_{0} \subseteq V(T_{k+1}^{c})$.
\end{claim}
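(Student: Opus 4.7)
I would argue by contradiction: assume $X_0 \not\subseteq V(T_{k+1}^c)$ and write $Y := X_0 \cap V(T_{k+1}^c)$ and $Z := X_0 \cap U$, both nonempty ($v_1 \in Y$, and $Z \neq \emptyset$ by the hypothesis). By Claim~\ref{equality-condition}(i), $X_0$ is the unique part of $\mathcal{P}_1$ meeting $V(T_{k+1}^c)$, so the remaining parts $X_1, \ldots, X_m$ of $\mathcal{P}_1$ are all contained in $U$.

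The plan is to consider the subpartition $\mathcal{P}_1^\star := (\mathcal{P}_1 \setminus \{X_0\}) \cup \{Y\}$, obtained by shrinking $X_0$ to the proper subset $Y$. Since $u_1 \in U \setminus X_0 \subseteq V \setminus Y$ and $v_1 \in Y$, the arc $\overrightarrow{u_1 v_1} \in A[U, V(T_{k+1}^c)]$ enters $Y$ in $\mathcal{P}_1^\star$; so if $\mathcal{P}_1^\star \in \mathcal{D}_1(V)$, then $Y$ is a proper subset of $X_0$ lying in a partition from $\mathcal{D}_1(V)$ and entered by an arc of $A[U, V(T_{k+1}^c)]$, contradicting the minimality of $X_0$. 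Write $A' := A \setminus \cup_{i=1}^{c-1} A(T_{k+1}^i)$. Since $|\mathcal{P}_1^\star| = |\mathcal{P}_1|$, membership of $\mathcal{P}_1^\star$ in $\mathcal{D}_1(V)$ is equivalent to the equality $d^-_{A'}(Y) = d^-_{A'}(X_0)$. The direction $d^-_{A'}(Y) \geq d^-_{A'}(X_0)$ is immediate by applying (\ref{k-spanning-arborescences}) to $\mathcal{P}_1^\star$ and subtracting the tightness equality $\sum_{X \in \mathcal{P}_1} d^-_{A'}(X) = k(|\mathcal{P}_1|-1)$. Using the disjointness of $Y$ and $Z$, the reverse inequality $d^-_{A'}(Y) \leq d^-_{A'}(X_0)$ rewrites as $|A'[Z, Y]| \leq |A'[V \setminus X_0, Z]|$.

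The main obstacle is establishing this reverse inequality. I would exploit two features simultaneously. First, the rigid structure forced by $\mathcal{P}_1 \in \mathcal{D}_1(V)$: each root $r_i$ lies in some part of $\mathcal{P}_1$; every arc of $A'$ entering a part of $\mathcal{P}_1$ must lie in some spanning arborescence $T_i$ with $i \in [k]$; each such $T_i$ contributes exactly one arc per non-root part; and no arc of $T_{k+1}^c$ enters $X_0$, which, combined with the arborescence structure, forces the root $r$ of $T_{k+1}^c$ to lie in $Y$. Second, the Case~2.2.2 hypothesis, which tells us that any arc $\overrightarrow{zy} \in A'[Z, Y] \subseteq A[U, V(T_{k+1}^c)]$ enters some part of some tight partition $\mathcal{P}' \in \mathcal{D}_1(V)$. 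I would then apply the properly intersecting elimination operation $\mathrm{PIEO}^{1}$ of Section~2 to the pair $\mathcal{P}_1, \mathcal{P}'$: since $d^-_{A'}$ is submodular and both $\mathcal{P}_1$ and $\mathcal{P}'$ have zero slack with respect to (\ref{k-spanning-arborescences}), the total slack cannot increase, so the final $\mathcal{F}_3, \mathcal{F}_4$ both remain in $\mathcal{D}_1(V)$. By choosing the companion arc $\overrightarrow{zy}$ and partition $\mathcal{P}'$ carefully, the resulting tight subpartition should contain a part which is a proper subset of $X_0$ and is entered by $\overrightarrow{zy}$, again contradicting the minimality of $X_0$. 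The delicate point will be guaranteeing that this $\mathrm{PIEO}^{1}$ process really produces a part that is strictly contained in $X_0$, rather than something incomparable to or equal to $X_0$.
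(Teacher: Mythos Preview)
Your proposal is essentially the paper's approach, but the exposition conflates two separate branches and leaves the crux unresolved.

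The logical structure should be a dichotomy, not a direct proof of the reverse inequality. If $d^-_{A'}(Y)\le d^-_{A'}(X_0)$ (equivalently $|A'[Z,Y]|\le |A'[V\setminus X_0,Z]|$), then combined with $d^-_{A'}(Y)\ge d^-_{A'}(X_0)$ you get equality, so $\mathcal{P}_1^{\star}\in\mathcal{D}_1(V)$ and minimality of $X_0$ is violated. Otherwise $|A'[Z,Y]|>0$, so an arc $\overrightarrow{u_2v_2}\in A'[Z,Y]\subseteq A[U,V(T_{k+1}^{c})]$ exists, and only now do Case~2.2.2 and PIEO come in. You phrase the second branch as a way of \emph{proving} the reverse inequality, which it is not; it gives a contradiction directly. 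Also, your ``first feature'' (roots in parts, every entering arc lies in some $T_i$, $r\in Y$, etc.) plays no role in this claim; that structural analysis appears only \emph{after} Claim~\ref{locate-X0} in the paper.

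The point you flag as ``delicate'' is exactly where the argument must be made precise, and the paper does so as follows. Let $X_2\in\mathcal{P}'$ be the part entered by $\overrightarrow{u_2v_2}$. Then $v_2\in X_0\cap X_2$ and $u_2\in X_0\setminus X_2$ (since $u_2\in Z\subseteq X_0$ but $u_2\notin X_2$); by minimality of $X_0$ we must have $X_2\setminus X_0\neq\emptyset$, so $X_0$ and $X_2$ are properly intersecting. Now start the PIEO process on $\mathcal{P}_1\uplus\mathcal{P}'$ by uncrossing \emph{this specific pair} first, producing $X_0\cap X_2$ and $X_0\cup X_2$. The set $X_0\cap X_2$ is disjoint from every other member of $\mathcal{P}_1$ and of $\mathcal{P}'$, and is strictly contained in $X_0\cup X_2$; hence it is never properly intersecting anything in later steps and survives, non-maximal, into $\mathcal{P}_4$. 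Submodularity of $d^-_{A'}$ together with tightness of $\mathcal{P}_1,\mathcal{P}'$ forces $\mathcal{P}_4\in\mathcal{D}_1(V)$. Since $u_2\notin X_2\supseteq X_0\cap X_2$ and $v_2\in X_0\cap X_2$, the arc $\overrightarrow{u_2v_2}$ enters $X_0\cap X_2\subsetneq X_0$, contradicting minimality. Without pinning down this first uncrossing step, your worry that PIEO might produce only sets incomparable to $X_0$ is legitimate and the proof is incomplete.
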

\begin{proof}
Suppose to the contrary that $X_{0} \nsubseteq V(T_{k+1}^{c})$, that is, $X_{0} \cap U \neq \emptyset$. Since $\overrightarrow{u_{1}v_{1}} \in A[U, V(T_{k+1}^{c})]$ enters $X_{0}$, we have $v_{1} \in X_{0} \cap V(T_{k+1}^{c}) \neq \emptyset$. Denote $X_{1}:=X_{0} \cap V(T_{k+1}^{c})$. If $\mathcal{P}_{1}-X_{0}+X_{1} \in \mathcal{D}_{1}(V)$, then $X_{1} \subsetneq X_{0}$ is such that $\overrightarrow{u_{1}v_{1}} $ enters $X_{1}$ and $X_{1} \in \mathcal{P}_{1}-X_{0}+X_{1}$, contradicting the minimality of $X_{0}$. So $\mathcal{P}_{1}-X_{0}+X_{1} \notin \mathcal{D}_{1}(V)$, that is,
$$\sum_{X \in \mathcal{P}_{1}-X_{0}+X_{1}}d_{A \setminus \cup_{i=1}^{c-1}A(T_{k+1}^{i})}^{-}(X)>k(|\mathcal{P}-X_{0}+X_{1}|-1)=k(|\mathcal{P}|-1). $$
Since $\mathcal{P}_{1} \in \mathcal{D}_{1}(V)$, we have
$$\sum_{X \in \mathcal{P}_{1}-X_{0}+X_{1}}d_{A \setminus \cup_{i=1}^{c-1}A(T_{k+1}^{i})}^{-}(X) >\sum_{X \in \mathcal{P}_{1}}d_{A \setminus \cup_{i=1}^{c-1}A(T_{k+1}^{i})}^{-}(X),$$
that is, $d_{A \setminus \cup_{i=1}^{c-1}A(T_{k+1}^{i})}^{-}(X_{1})> d_{A \setminus \cup_{i=1}^{c-1}A(T_{k+1}^{i})}^{-}(X_{0})$. Considering $ X_{1} =X_{0} \cap V(T_{k+1}^{c})$, there exists some $\overrightarrow{u_{2}v_{2}} \in A$ with $u_{2} \in X_{0} \setminus X_{1}$ and $v_{2} \in  X_{1}$.
Of course, $\overrightarrow{u_{2}v_{2}} \in A[U, V(T_{k+1}^{c})]$. By the hypothesis of \textbf{Case 2.2.2}, there exists $\mathcal{P}_{2} \in \mathcal{D}_{1}(V)$ such that $\overrightarrow{u_{2}v_{2}}$ enters some $X_{2} \in \mathcal{P}_{2}$.

Note that $v_{2} \in X_{0} \cap X_{2} \neq \emptyset $, $u_{2} \in X_{0} \setminus X_{2} \neq  \emptyset$; by the minimality of $X_{0}$, $X_{2} \setminus X_{0} \neq \emptyset $. So $X_{0}$ and $X_{2}$ are properly intersecting.  We adopt PIEO$^{1}$s in $\mathcal{G}_{0}=\mathcal{P}_{1} \uplus \mathcal{P}_{2}$, step by step, and obtain families   $\mathcal{G}_{0}, \mathcal{G}_{1}=\mathcal{G}_{0}-X_{0}-X_{2}+X_{0}\cap X_{2}+X_{0} \cup X_{2}, \ldots, \mathcal{G}_{n}$ of subsets  of $V$.
Recall that $\mathcal{G}_{i}'$ is the family of maximal elements in  $\mathcal{G}_{i}$, $\mathcal{P}_{3}:= \mathcal{G}_{n}' $ and $\mathcal{P}_{4}:=\mathcal{G}_{n} \setminus \mathcal{P}_{3} $. Since the indegree funtion $d_{A \setminus \cup_{i=1}^{c-1}A(T_{k+1}^{i})}^{-}(*)$ defined on $2^{V}$ is submodular, we have
\[
\begin{split}
k(|\mathcal{P}_{1}|-1)+k(|\mathcal{P}_{2}|-1)
& = \sum_{X \in \mathcal{P}_{1}} d_{A \setminus \cup_{i=1}^{c-1}A(T_{k+1}^{i})}^{-}(X)+\sum_{X \in \mathcal{P}_{2}} d_{A \setminus \cup_{i=1}^{c-1}A(T_{k+1}^{i})}^{-}(X)\\
& ~~~\text{(by $\mathcal{P}_{1}, \mathcal{P}_{2} \in \mathcal{D}_{1}(V)$)}
\end{split}
\]

\[
\begin{split}
~~~~~~~~~~~~~~~~~~~~~~~~~~~~~~~~
& =\sum_{X \in \mathcal{G}_{0}} d_{A \setminus \cup_{i=1}^{c-1}A(T_{k+1}^{i})}^{-}(X)\\
& \geq \sum_{X \in \mathcal{G}_{1}} d_{A \setminus \cup_{i=1}^{c-1}A(T_{k+1}^{i})}^{-}(X)~~~\text{(by submodularity)}\\
&  \ldots\\
& \geq \sum_{X \in \mathcal{G}_{n}} d_{A \setminus \cup_{i=1}^{c-1}A(T_{k+1}^{i})}^{-}(X)\\
& =\sum_{X \in \mathcal{P}_{3}} d_{A \setminus \cup_{i=1}^{c-1}A(T_{k+1}^{i})}^{-}(X)+\sum_{X \in \mathcal{P}_{4}} d_{A \setminus \cup_{i=1}^{c-1}A(T_{k+1}^{i})}^{-}(X)\\
& \geq  k(|\mathcal{P}_{3}|-1)+k(|\mathcal{P}_{4}|-1) ~~~\text{(by (\ref{k-spanning-arborescences}))}.
\end{split}
\]
Since $ |\mathcal{P}_{1}|+|\mathcal{P}_{2}|=|\mathcal{P}_{3}|+|\mathcal{P}_{4}|$, all ``$\geq$''s should be ``$=$''s. So we have $\mathcal{P}_{4} \in \mathcal{D}_{1}(V)$.

Since $X_{0} \cap X_{2} \subsetneq X_{0} \cup X_{2}$ in $\mathcal{G}_{1}$, we have $X_{0} \cap X_{2} \in \mathcal{G}_{1} \setminus \mathcal{G}_{1}'$. During the process of PIEO$^{1}$s, $X_{0} \cap X_{2} \in \mathcal{G}_{i} \setminus \mathcal{G}_{i}'$ for $1 \leq i \leq n$. So $X_{0} \cap X_{1} \in \mathcal{G}_{n} \setminus \mathcal{G}_{n}'=\mathcal{P}_{4}$, combing that $\overrightarrow{u_{2}v_{2}} \in A[U, V(T_{k+1}^{c})]$ enters $X_{0} \cap X_{2}$ and $\mathcal{P}_{4} \in \mathcal{D}_{1}(V)$, which contradicts the minimality of $X_{0}$.
\end{proof}

For $1 \leq i \leq k$ and $X \in \mathcal{P}_{1}$, if $r_{i} \notin X$, since $T_{i}$ is a spanning $r_{i}$-arborescence, there exists a directed path in $T_{i}$ from $r_{i}$ to $X$; thus we have $d_{T_{i}}^{-}(X) \geq 1$. So for $X \in \mathcal{P}_{1}$, we have
\begin{equation}\label{X-equation}
d_{\cup_{i=1}^{c}A(T_{i})}^{-}(X) \geq |\{i \in [k]: r_{i} \notin X \}|.
\end{equation}
Since $\cup_{i=1}^{k}A(T_{i}) \subseteq A \setminus \cup_{i=1}^{c-1}A(T_{k+1}^{i})$, we have
\begin{equation}\label{A-X-equation}
d_{A \setminus \cup_{i=1}^{c-1}A(T_{k+1}^{i})}^{-}(X) \geq d_{\cup_{i=1}^{c}A(T_{i})}^{-}(X).
\end{equation}
Hence, by (\ref{X-equation}) and (\ref{A-X-equation}), we have
\begin{equation}\label{proof-arborescence-part}
\begin{split}
\sum_{X \in \mathcal{P}_{1}}d_{A \setminus \cup_{i=1}^{c-1}A(T_{k+1}^{i})}^{-}(X)
& \geq \sum_{X \in \mathcal{P}_{1}} |\{i \in [k]: r_{i} \notin X \}|\\
& =\sum_{X \in \mathcal{P}_{1}} (k-|\{i \in [k]: r_{i} \in X \}|)
\end{split}
\end{equation}
\[
\begin{split}
~~~~~~~~~~~~~~~~~~
& =k|\mathcal{P}_{1}|- \sum_{X \in \mathcal{P}_{1}}|\{i \in [k]: r_{i} \in X \}|\\
& \geq k|\mathcal{P}_{1}|-k.
\end{split}
\]
Since $\mathcal{P}_{1} \in \mathcal{D}_{1}(V)$, we have the equalities of (\ref{proof-arborescence-part}) hold and thus the equalities of (\ref{X-equation}) and (\ref{A-X-equation}) hold.

Without loss of generality, suppose $r_{1}, \ldots, r_{s} \notin X_{0}$  and $r_{s+1}, \ldots, r_{k} \in X_{0}$.
The equality of (\ref{X-equation}) for $X=X_{0}$ implies that
\[
d_{T_{i}}^{-}(X_{0})=
\begin{cases}
1, 1 \leq i \leq s,\\
0, s+1 \leq i \leq k.
\end{cases}
\]

For $ 1 \leq i \leq s$, suppose $\overrightarrow{x_{i}y_{i}}$ is the arc in $T_{i}$ entering $X_{0}$. Considering $r_{i} \notin X_{0}$ and $d_{T_{i}}^{-}(X_{0})=1$, $T_{i}[X_{0}]$ is an arborescence  rooted at $y_{i} $ (otherwise, $d_{T_{i}}^{-}(X_{0}) \geq 2$, a contradiction).
For $s+1 \leq i \leq k$, considering $r_{i} \in X_{0}$ and $d_{T_{i}}^{-}(X_{0})=0$,  $T_{i}[X_{0}]$ is an arborescence  rooted at $r_{i}$ (otherwise, $d_{T_{i}}^{-}(X_{0}) \geq 1$, a contradiction).

Since $X_{0} \subseteq V(T_{k+1}^{c})$ by Claim~\ref{locate-X0}, we have $ d_{A \setminus \cup_{i=1}^{c-1}A(T_{k+1}^{i})}^{-}(X_{0})=d_{A}^{-}(X_{0})$.
The equality of (\ref{A-X-equation}) for $X=X_{0}$ implies that  $d_{A}^{-}(X_{0})= d_{\cup_{i=1}^{c}A(T_{i})}^{-}(X)$. So $A^{-}(X_{0})$ consists of $\overrightarrow{x_{1}y_{1}}$, \ldots, $\overrightarrow{x_{s}y_{s}}$, and $d_{T_{k+1}^{c}}^{-}(X_{0})=0$. Considering $X_{0} \subseteq V(T_{k+1}^{c})$,  $T_{k+1}^{c}[X_{0}]$ is an arborescence rooted at $r_{0}$.

First, suppose $|X_{0}| \geq 2$. To summarize the above, we have $|X_{0}| \geq 2$, $A^{-}(X_{0})$ consists of $\overrightarrow{x_{1}y_{1}}$, \ldots, $\overrightarrow{x_{s}y_{s}}$, and there exist $k+1$ arborescences $T_{1}[X_{0}], \ldots, T_{k}[X_{0}], T_{k+1}^{c}[X_{0}]$ in $D[X_{0}]$, the first $s$ of which are rooted at $y_{1}, \ldots, y_{s}$, respectively.  By Claim~\ref{contraction}, there exists such a packing as Theorem~\ref{directed-version} demands.

Finally, suppose $|X_{0}|=1$. The equalities of (\ref{proof-arborescence-part}) imply  $\sum_{X \in \mathcal{P}_{1}}|\{i \in [k]: r_{i} \in X \}|=k$, that is, $\{ r_{1}, \ldots, r_{k}\} \subseteq \cup \mathcal{P}_{1}$. Recall that $T_{k+1}^{c}[X_{0}]$ is an arborescence rooted at $r_{0}$; thus $r_{0} \in X_{0}$. Considering $|X_{0}|=1$, we have $X_{0}=\{ r_{0}\}$. By Claim~\ref{equality-condition} (i), for any $X \in \mathcal{P}_{1}- X_{0}$, $X  \cap V(T_{k+1}^{c})= \emptyset$; thus $\cup \mathcal{P}_{1} \cap V(T_{k+1}^{c})=X_{0}= \{ r_{0}\}$.  So $ \{r_{0}, r_{1}, \ldots, r_{k}\} \cap V(T_{k+1}^{c})= \{ r_{0}\}$.

For any $X \subseteq V(T_{k+1}^{c})-r_{0}$ and $ 1 \leq i \leq k$,  since there exists a directed path in $T_{i}$ or  $T_{k+1}^{c} $ from their root to $X$, respectively, we have $d_{T_{i}}^{-}(X) \geq 1$ and $ d_{T_{k+1}^{c}}^{-}(X) \geq 1$. Hence,  for $X \subseteq V(T_{k+1}^{c})-r_{0}$, we have
\begin{equation}\label{out-of-r0}
d_{A}^{-}(X) \geq k+1.
\end{equation}

By the hypothesis of \textbf{Case 2}, without loss of generality, let $|\mathcal{P}_{0}|$ be minimum such that $\mathcal{P}_{0}$ witnesses the equality of (\ref{frac-part}).

\begin{claim}\label{estimate-P0}
For any $X \in \mathcal{P}_{0}$, if $r_{0} \notin X$, then $X \cap U \neq \emptyset$.
\end{claim}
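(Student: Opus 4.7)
The plan is to argue by contradiction: suppose there is some $X \in \mathcal{P}_0$ with $r_0 \notin X$ and $X \cap U = \emptyset$. Since the vertex set $V$ is partitioned by $V(T_{k+1}^c)$ and $U = \cup_{i=1}^{c-1} V(T_{k+1}^i)$, the assumption $X \cap U = \emptyset$ forces $X \subseteq V(T_{k+1}^c)$, and together with $r_0 \notin X$ we obtain $X \subseteq V(T_{k+1}^c) - r_0$. Applying (\ref{out-of-r0}) then yields the strong bound $d_A^-(X) \geq k + 1$.

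Next I would try to contradict the minimality of $|\mathcal{P}_0|$ by deleting $X$ from $\mathcal{P}_0$. Since $\mathcal{P}_0$ witnesses equality of (\ref{frac-part}), subtracting $d_A^-(X) \geq k+1$ from both sides gives
$$\sum_{Y \in \mathcal{P}_0 - X} d_A^-(Y) \;\leq\; k(|\mathcal{P}_0|-1) + |\mathcal{P}_0| - c - (k+1) \;=\; k(|\mathcal{P}_0 - X|-1) + |\mathcal{P}_0 - X| - c.$$
Because (\ref{frac-part}) supplies the reverse inequality for the subpartition $\mathcal{P}_0 - X$, equality holds throughout, so $\mathcal{P}_0 - X$ is itself a witness to the equality of (\ref{frac-part}). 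As long as $|\mathcal{P}_0 - X| \geq 2$, this contradicts the choice of $\mathcal{P}_0$ as a smallest such witness.

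The delicate point, and what I expect to be the main obstacle in writing up, is the boundary case $|\mathcal{P}_0| = 2$, where $\mathcal{P}_0 - X$ is a singleton and the notion of ``witnessing equality'' needs care. I would handle this by a direct numerical check: if $\mathcal{P}_0 = \{X, Y\}$, then the equality on $\mathcal{P}_0$ reads $d_A^-(X) + d_A^-(Y) = k + 2 - c$, so combined with $d_A^-(X) \geq k + 1$ and the standing hypothesis $c \geq 2$, one forces $d_A^-(Y) \leq 1 - c \leq -1$, which is absurd. The even smaller cases $|\mathcal{P}_0| \leq 1$ are automatically excluded, because equality of (\ref{frac-part}) for $|\mathcal{P}_0| \in \{0,1\}$ would already force a negative sum of indegrees. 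Thus the contradiction goes through uniformly, and the claim follows.
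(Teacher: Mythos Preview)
Your proof is correct and follows essentially the same line as the paper's own argument: assume some $X \in \mathcal{P}_0$ lies in $V(T_{k+1}^c)-r_0$, invoke (\ref{out-of-r0}) to get $d_A^-(X)\geq k+1$, and delete $X$ from $\mathcal{P}_0$ to contradict the minimality of $|\mathcal{P}_0|$. The paper simply asserts that $\mathcal{P}_0-X$ again witnesses equality of (\ref{frac-part}) and leaves the small-$|\mathcal{P}_0|$ edge cases implicit; your explicit numerical check for $|\mathcal{P}_0|=2$ (and $|\mathcal{P}_0|\leq 1$) is extra care rather than a different idea.
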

\begin{proof}
Suppose to the contrary that there exists some $X_{3} \in \mathcal{P}_{0} $ such that $X_{3} \subseteq V(T_{k+1}^{c})-r_{0}$.  By (\ref{out-of-r0}), we have $d_{A}^{-}(X_{3}) \geq k+1$; since $\mathcal{P}_{0}$ witnesses the equality of (\ref{frac-part}), we have
\[
\begin{split}
\sum_{X \in \mathcal{P}_{0}-X_{3}}d_{A}^{-}(X)
& =\sum_{X \in \mathcal{P}_{0}}d_{A}^{-}(X)-d_{A}^{-}(X_{3})\\
& \leq k(|\mathcal{P}_{0}|-1)+ |\mathcal{P}_{0}|-c-(k+1)\\
& =k(|\mathcal{P}_{0}-X_{3}|-1)+|\mathcal{P}_{0}-X_{3}|-c.
\end{split}
\]
By (\ref{frac-part}), we have $\sum_{X \in \mathcal{P}_{0}-X_{3}}d_{A}^{-}(X) \geq k(|\mathcal{P}_{0}-X_{3}|-1)+|\mathcal{P}_{0}-X_{3}|-c$. So $ \mathcal{P}_{0}-X_{3}$ witnesses the equality of (\ref{frac-part}), contradicting to the choice of $\mathcal{P}_{0}$.
\end{proof}

By Claim~\ref{estimate-P0}, each $X \in \mathcal{P}_{0}$ except at most one intersects $U$. So
\begin{equation}\label{estimate-union-T-i}
|U|=\sum_{i=1}^{c-1}|V(T_{k+1}^{i})| \geq |\mathcal{P}_{0}|-1.
\end{equation}
Since $\mathcal{P}_{0}$ witnesses the equality of (\ref{frac-part}) and $\nu_{f}(D) > k+ \frac{d-1}{d}$, we have
$$\sum_{X \in \mathcal{P}_{0}} d_{A}^{-}(X)=k(|\mathcal{P}_{0}|-1)+ |\mathcal{P}_{0}|-c>k(|\mathcal{P}_{0}|-1)+\frac{d-1}{d}(|\mathcal{P}_{0}|-1), $$
implying
\begin{equation}\label{estimate-P0-cd}
|\mathcal{P}_{0}|-1>(c-1)d.
\end{equation}
By (\ref{estimate-union-T-i}) and (\ref{estimate-P0-cd}), we have
\[
\frac{\sum_{i=1}^{c-1}|V(T_{k+1}^{i})|}{c-1}>d.
\]
So there exists some $1 \leq i_{2} \leq c-1$ such that $|V(T_{k+1}^{i_{2}})|>d$, that is, $T_{k+1}^{i_{2}}$ has at least $d$ arcs, completing the proof.

\noindent \textbf{Sharpness of the bound.} Let $k, d$ be integers with $k \geq d+1 \geq 2$. Let $G=(V, E)$ be an undirected graph such that $V=\{v_{1}, \ldots, v_{d+1} \}$, $|E|=kd+d-1$ and $\gamma_{f}(G)<k+1$.

By Theorem~\ref{Nash-forest-decomposition}, since $\gamma_{f}(G)<k+1$, $G$ can be decomposed into $k+1$ edge disjoint  spanning forests $F_{1}, \ldots, F_{k+1}$. Denote by $c_{i}$ the number of components of $F_{i}$ for $1 \leq i \leq k+1$. Then considering
$$kd+d-1=\sum_{i=1}^{k+1}|E(F_{i})|=\sum_{i=1}^{k+1}(d+1-c_{i}), $$
we have $\sum_{i=1}^{k+1}c_{i}=k+2$; thus all $c_{i}=1$ except exactly one $c_{i}=2$. Considering $k \geq d+1=|V|$, orient each $F_{i}$ to be a spanning branching such that $\cup_{i=1}^{k+1}R(F_{i})=V$.  Denote by $D=(V, A)$ the obtained digraph. Then for any $v \in V$, $d_{A}^{-}(v)=k+1-|\{i \in [k+1]:v \in R(F_{i}) \}|$.

Since $\gamma(G)<k+1$, we have for any $\emptyset \neq X \subseteq V $,
\begin{equation}\label{estimate-E(X)}
|E(X)| \leq (k+1)(|X|-1);
\end{equation}
the equality of (\ref{estimate-E(X)}) holds only when $|X|=1$.
For any $\mathcal{P} \in \mathcal{D}(V)$, we have
\begin{equation}\label{estimate-indegree-v}
\begin{split}
\sum_{v \in \cup \mathcal{P}}d_{A}^{-}(v)
& =\sum_{v \in \cup \mathcal{P}} (k+1-|\{i \in [k+1]:v \in R(F_{i}) \}|)\\
& =(k+1) |\cup \mathcal{P}|- \sum_{v \in \cup \mathcal{P}}|\{i \in [k+1]:v \in R(F_{i}) \}|\\
& = (k+1) |\cup \mathcal{P}|-\sum_{i=1}^{k+1}|R(F_{i}) \cap (\cup \mathcal{P})|\\
& \geq (k+1) |\cup \mathcal{P}|-\sum_{i=1}^{k+1}|R(F_{i})| \\
& = (k+1) |\cup \mathcal{P}|-(k+2);
\end{split}
\end{equation}
since $\cup_{i=1}^{k+1} R(F_{i})=V$, the equality of (\ref{estimate-indegree-v}) holds only when $\cup \mathcal{P}=V$.
So for any $\mathcal{P} \in \mathcal{D}(V)$, we have
\[
\begin{split}
\sum_{X \in \mathcal{P}} d_{A}^{-}(X)
& = \sum_{v \in \cup \mathcal{P}}d_{A}^{-}(v)-\sum_{X \in \mathcal{P}}|A(X)|\\
& \geq (k+1)|\cup \mathcal{P}|-(k+2)-\sum_{X \in \mathcal{P}}(k+1)(|X|-1) \text{(by (\ref{estimate-E(X)}) and (\ref{estimate-indegree-v}))}\\
& =(k+1)(|\mathcal{P}|-1)-1;
\end{split}
\]
the equality holds only when $\mathcal{P}=\mathcal{P}_{0}=\{\{v_{1}\}, \ldots, \{v_{d+1} \} \}$. So $\frac{\sum_{X \in \mathcal{P}}d_{A}^{-}(X)}{|\mathcal{P}|-1} \geq k+1$ if $\mathcal{P} \neq \mathcal{P}_{0}$; $\frac{\sum_{X \in \mathcal{P}_{0}}d_{A}^{-}(X)}{|\mathcal{P}_{0}|-1} = k+ \frac{d-1}{d}$. Hence, $\nu_{f}(D)=k+\frac{d-1}{d}$.

Since $ |V|=d+1$, an arborescence with at least $d$ arcs is spanning; Considering $|A| <(k+1)d$, $D$ doesn't contain such a packing as Theorem~\ref{directed-version} demands.

\section{Concluding remarks}

One can see that the conditions of Theorems~\ref{undirected-version} and \ref{directed-version}  are very similar. In most cases, the small difference is that the condition for packing trees in undirected graphs is about the lower bound of $E(G/\mathcal{P})$, where $\mathcal{P}$ is a partition, while the condition for packing arborescences in digraphs is about the lower bound of $\sum_{X \in \mathcal{P}} d_{A}^{-}(X)$, where $\mathcal{P}$ is a subpartition; readers can refer to \cite{HMS-25} for more examples.

As for packing mixed arborescences in mixed graphs, one can see a generalization of Theorem~\ref{Nash-Williams-Tutte} and \ref{packing-spanning-arborescence} in \cite{GY-21-spanning} by replacing $E(\mathcal{P})$ and $\sum_{X \in \mathcal{P}} d_{A}^{-}(X)$ with $E(\mathcal{P})+\sum_{X \in \mathcal{P}} d_{A}^{-}(X) $, where $\mathcal{P}$ is a subpartition. Hence, we give the following conjecture.

\begin{conjecture}\label{conjecture-new}
Let $H=(V, E, A)$ be a mixed graph, $k \geq 0$, $d \geq 1$ be integers.  If for any $\mathcal{P} \in \mathcal{D}(V)$, $\frac{E(\mathcal{P})+\sum_{X \in \mathcal{P}}d_{A}^{-}(X)}{|\mathcal{P}|-1} > k+ \frac{d-1}{d}$, then $H$ contains edge and arc disjoint $k$  spanning mixed arborescences and another mixed branching $F$ with
$|E(F)|+|A(F)|> \frac{d-1}{d}(|V|-1)$, and if $F$ is not a  spanning arborescence, then $F$ has a component with at least $d$ edges and arcs. Moreover, the bound is sharp.
\end{conjecture}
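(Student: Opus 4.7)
The plan is to induct on $|V(H)|$ following the template of Section~3, with Lemma~\ref{k+extra} replaced by its mixed-graph analogue proved in \cite{GY-21-spanning}: $H$ admits $k$ edge and arc disjoint spanning mixed arborescences together with a spanning mixed branching $F$ having $R(F)\supseteq U$ and $|R(F)|=c$ if and only if, for every $\mathcal{P}\in\mathcal{D}(V)$,
\begin{equation*}
|E(\mathcal{P})|+\sum_{X\in\mathcal{P}}d_A^-(X)\geq k(|\mathcal{P}|-1),
\end{equation*}
\begin{equation*}
|E(\mathcal{P})|+\sum_{X\in\mathcal{P}}d_A^-(X)\geq k(|\mathcal{P}|-1)+\sum_{X\in\mathcal{P}}|P_{k+1}(X)|-(c-|U|).
\end{equation*}
Setting $c:=\lceil\frac{|V|-1}{d}\rceil$ and $U:=\emptyset$, the hypothesis combined with integrality of the left-hand side yields both inequalities, delivering an initial packing of $k$ spanning mixed arborescences $T_1,\ldots,T_k$ and a spanning mixed $c$-branching $F=T_{k+1}^1\cup\cdots\cup T_{k+1}^c$.

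The case split then proceeds in parallel with Section~3. If the defect inequality is everywhere strict, I would produce a spanning $(c-1)$-branching by the same lemma and conclude either directly (when $c=2$) or via the pigeonhole estimate $\frac{|V|}{c-1}>d$. Otherwise, I would take a packing minimizing $|V(T_{k+1}^c)|$ and formulate a mixed analogue of Claim~\ref{contraction}: if $W\subsetneq V$ has $|W|\geq 2$, the incoming boundary of $W$ (arcs together with one oriented copy of each undirected boundary edge) has prescribed heads $y_1,\ldots,y_s$, and $H[W]$ contains $k+1$ compatible spanning mixed arborescences rooted accordingly, then lifting the packing inductively obtained on $H/W$ preserves the conclusion of the conjecture. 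The analogue of Case~2.2 is then driven by the submodularity of the set-function $p(X):=|E(X,V\setminus X)|+d_A^-(X)$ combined with the PIEO$^{1}$ machinery of Section~2, which extracts a minimal $X_0\subseteq V(T_{k+1}^c)$ whose incoming boundary is exactly the one prescribed by the packing structure, so that Claim~\ref{contraction} applies when $|X_0|\geq 2$; when $|X_0|=1$, the minimality of the partition $\mathcal{P}_0$ realizing equality yields the density estimate $|\mathcal{P}_0|-1>(c-1)d$ exactly as in the proof of Theorem~\ref{directed-version}, producing a component of $F$ with more than $d$ vertices and hence at least $d$ edges-plus-arcs.

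I expect the main obstacle to lie in the analogue of Case~2.2.1 and in the equality analysis underlying Claim~\ref{locate-X0}. Unlike arcs, an undirected edge on the boundary of a set $X$ can be oriented either way when it is incorporated into a mixed arborescence, so the ``unique entering arc $\overrightarrow{x_iy_i}$ of $T_i$'' of the digraph argument must be replaced by a more careful accounting: an edge of $E(X,V\setminus X)$ contributes simultaneously to $p(X)$ and to $p(V\setminus X)$, and the orientation it receives in a particular $T_i$ is not uniquely determined by the packing structure. Consequently, the mixed analogue of Claim~\ref{update}---which augments some $T_{k+1}^{i_1}$ by an element of the boundary $A[U,V(T_{k+1}^c)]\cup E[U,V(T_{k+1}^c)]$ without violating the two defect inequalities---must treat undirected boundary edges as arcs with a chosen orientation and verify that the choice of orientation can be made consistently with the equality witnesses in $\mathcal{D}_1(V)$ and $\mathcal{D}_2(V)$. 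Granting that this obstacle yields to the same PIEO$^{1}$-and-submodularity toolkit already used in \cite{GY-21-spanning} to push through the mixed Nash-Williams-Tutte analogue, sharpness of the bound follows from a mixed extension of the construction at the end of Section~3: take an undirected graph on $d+1$ vertices with $|E|=kd+d-1$ and $\gamma_f(G)<k+1$, decompose it into $k+1$ spanning forests by Theorem~\ref{Nash-forest-decomposition}, orient each forest into a mixed branching whose roots together cover $V$, and check that the resulting mixed graph $H$ satisfies $\frac{|E(\mathcal{P})|+\sum d_A^-(X)}{|\mathcal{P}|-1}=k+\frac{d-1}{d}$ with equality only at the partition into singletons, while $|E(H)|+|A(H)|<(k+1)d$ forbids the packing required by Conjecture~\ref{conjecture-new}.
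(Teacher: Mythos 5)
The statement you are attempting to prove is Conjecture~\ref{conjecture-new}, which the paper deliberately leaves open; the concluding remarks state outright that the method of Theorem~\ref{directed-version} does not extend to the mixed setting. Your proposal's very first step is the fatal one: you assert that Lemma~\ref{k+extra} has a ``mixed-graph analogue proved in \cite{GY-21-spanning},'' with the displayed pair of inequalities replacing (\ref{pre-arborescence-part}) and (\ref{pre-branching-part}). No such lemma exists. What \cite{GY-21-spanning} proves is a mixed analogue of Theorems~\ref{Nash-Williams-Tutte} and~\ref{packing-spanning-arborescence} (pure packing of $k$ spanning mixed arborescences), not of Lemma~\ref{pack-branching}, which additionally constrains the root sets and their cardinalities. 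The paper records precisely this obstruction in Section~4: the mixed version of Lemma~\ref{pack-branching} is unknown, and Hoppenot, Martin and Szigeti~\cite{HMS-25} exhibit a counterexample to the naive extension obtained by replacing $\sum_{X\in\mathcal{P}}d_A^{-}(X)$ in (\ref{condition-pack-branching}) with $|E(\mathcal{P})|+\sum_{X\in\mathcal{P}}d_A^{-}(X)$. Since Lemma~\ref{k+extra} is simply a specialization of Lemma~\ref{pack-branching}, the mixed version you invoke is not available, and everything downstream of it---the initial spanning $c$-branching, the augmentation step in the analogue of Case~2.2.1, the re-invocation of the lemma after updating $U$ and $T_{k+1}^{i_1}$---collapses with it.

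You do, to your credit, flag a real difficulty: the orientation ambiguity of undirected boundary edges in the analogues of Claim~\ref{update} and Claim~\ref{locate-X0}. But you then wave it away with ``granting that this obstacle yields to the same PIEO$^{1}$-and-submodularity toolkit already used in \cite{GY-21-spanning}.'' The paper's own concluding discussion says exactly the opposite: that toolkit is known not to push through, there is a concrete counterexample at the level of the needed lemma, and any proof of Conjecture~\ref{conjecture-new}---should the conjecture even be true---would require a genuinely new idea. As written, your proposal does not close this gap; it restates the conjecture's difficulty and assumes away the one lemma that is actually in doubt.
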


The key technic for proving Theorem~\ref{directed-version} is Lemma~\ref{pack-branching}, whose mixed graphic version, however, is unknown  (Hoppenot, Martin and Szigeti \cite{HMS-25} found a counterexample if we just replace $\sum_{X \in \mathcal{P}}d_{A}^{-}(X)$ in (\ref{condition-pack-branching}) with $|E(\mathcal{P})|+ \sum_{X \in \mathcal{P}}d_{A}^{-}(X)$). Thus, Conjecture~\ref{conjecture-new} may be false or true; but in the latter case, proving it is insufficient via our method and would necessitate a novel approach.

\acknowledgment{This work was supported by the National Natural Science Foundation of China (No. 12201623) and the Natural Science Foundation of Jiangsu Province (No. BK20221105)}

\section*{Declarations}

\noindent \textbf{Conflict of interest} There is no conflict of interest between the author or anyone else regarding this
manuscript.

\singlespacing


\begin{thebibliography}{00}

\bibitem{BM-08} J. A. Bondy, U. S. R. Murty, Graph Theory, Springer, London, 2008.

\bibitem{FY-24} X. Fang, D. Yang, An extension of Nash-Williams and Tutte's theorem, J. Graph Theory 108(2) (2025) 361--367.

\bibitem{F-09} A. Frank, Rooted $k$-connections in digraphs, Discret. Appl. Math. 157 (2009) 1242--1254.

\bibitem{GY-21}
H. Gao, D. Yang, Packing branchings under cardinality constraints on their root sets, European J. Combin. 91 (2021) \#103212.

\bibitem{GY-21-spanning}
H. Gao, D. Yang, Packing of spanning mixed arborescences, J. Graph Theory  98(2) (2021) 367--377.

\bibitem{GY-23}
H. Gao, D. Yang, Digraph analogues for the Nine Dragon Tree Conjecture, J. Graph Theory 102(3) (2023) 521--534.

\bibitem{JY-17}
H. Jiang, D. Yang, Decomposing a graph into forests: the Nine Dragon Tree Conjecture is true,
Combinatorica 37(6) (2017) 1125--1137.

\bibitem{MMRZ-12}
M. Montassier, P. Ossona de Mendez, A. Raspaud, and X. Zhu, Decomposing a graph into forests, J. Combin.
Theory Ser. B. 102 (2012) 38--52.

\bibitem{HMS-25}
P. Hoppenot, M. Martin, Z. Szigeti, Regular packing of rooted hyperforests with root constraints in hypergraphs, Math. Program., to appear. https://doi.org/10.1007/s10107-024-02167-z.


\bibitem{N-64} C. St. J. A. Nash-Williams, Decompositions of finite graphs into forests, J. London
Math. Soc. 39 (1964) 12.

\bibitem{N-61}
C. St. J. A. Nash-Williams, Edge-disjoint spanning trees of finite graphs, J. Lond. Math. Soc. 36 (1961) 445--450.

\bibitem{T-61} W. T. Tutte, On the problem of decomposing a graph into $n$ connected factors, J. Lond. Math. Soc. 36 (1961) 221--230.

\end{thebibliography}
\end{document}